\newtheorem{theorem}{Theorem}
\newtheorem{lemma}{Lemma}
\begin{document}

\newcommand{\al}{\alpha}
\newcommand{\bt}{\beta}
\newcommand{\ep}{\varepsilon}
\newcommand{\dl}{\delta}
\newcommand{\ti}{\tilde}
\newcommand{\be}{\begin{equation}}
\newcommand{\ee}{\end{equation}}
\newcommand{\goto}{\rightarrow}
\newcommand{\half}{\frac{1}{2}}
\newcommand{\ld}{\lambda}
\newcommand{\p}{\partial}
\newcommand{\ph}{\varphi}
\newcommand{\ov}{\overline}
\newcommand{\pl}{\parallel}
\newcommand{\br}{\breve}
\newcommand{\var}{\varphi}

\newcommand\Ker{{\rm Ker}}
\newcommand\intl{\int\limits}
\newcommand\suml{\sum\limits}
\newcommand\maxl{\max\limits}
\newcommand\minl{\min\limits}
\newcommand\supl{\sup\limits}
\newcommand\infl{\inf\limits}
\newcommand\liml{\lim\limits}
\newcommand\ch{{\rm ch}}
\newcommand\tg{{\rm tg}}
\newcommand\rank{{\rm rank}}
\newcommand\const{{\rm const}}
\newcommand\Ga{\Gamma}
\renewcommand{\Im}{{\rm Im}}
\makeatletter \@addtoreset{equation}{section}
\renewcommand{\theequation}{\thesection.\arabic{equation}}
\makeatother

\font\bfsl=cmssi17 \font\sectiontt=cmtt12 scaled\magstep1
\font\stt=cmtt10 scaled 850

\renewcommand{\baselinestretch}{1}
\renewcommand{\theequation}{\arabic{section}.\arabic{equation}}
\renewcommand{\thetheorem}{\arabic{section}.\arabic{theorem}}
\renewcommand{\thelemma}{\arabic{section}.\arabic{lemma}}
\renewcommand{\theproposition}{\arabic{section}.\arabic{proposition}}
\renewcommand{\thedefinition}{\arabic{section}.\arabic{definition}}
\renewcommand{\thecorollary}{\arabic{section}.\arabic{corollary}}
\renewcommand{\theremark}{\arabic{section}.\arabic{remark}}
\pagestyle{empty}

 \pagestyle{myheadings}

 \pagestyle{myheadings} \pagestyle{myheadings}
\title{{\bf{Upper Bound For The  Ratios Of Eigenvalues Of Schrodinger
Operators With Nonnegative Single-Barrier Potentials}}}
\author{Jamel Ben Amara  \thanks{Faculty of Sciences of Tunis,
University of Tunis El Manar, Mathematical Engineering Laboratory,
Polytechnic School of Tunisia,
jamel.benamara@fsb.rnu.tn}~~~~~~~~~~Jihed Hedhly
\thanks{Faculty of Sciences  Bizerte, University of Carthage,
  Mathematical Engineering Laboratory,
Polytechnic School of Tunisia, hjihed@gmail.com.}\thanks{Accepted in
Mathematische Nachrichten 2018.}}

\date{}
\date{}
\date{}

 \maketitle
\begin{abstract}
In this paper we prove the optimal upper bound
$\frac{\lambda_{n}}{\lambda_{m}}\leq\frac{n^{2}}{m^{2}}$
$\Big(\lambda_{n}>\lambda _{m}\geq
11\sup\limits_{x\in[0,1]}q(x)\Big)$ for one-dimensional
 Schrodinger operators with a nonnegative
differentiable and single-barrier potential $q(x)$, such that $\mid
q'(x) \mid\leq q^{*},$ where $q^{*}=\frac{2}{15}\min\{q(0) ,
q(1)\}$. In particular, if $q(x)$ satisfies the additional condition
$\sup\limits_{x\in[0,1]}q(x)\leq \frac{\pi^{2}}{11}$, then
$\frac{\lambda _{n}}{\lambda _{m}}\leq \frac{n^{2}%
}{m^{2}}$ for $n>m\geq 1.$ For this result, we develop a new
approach to study the monotonicity of the modified Pr\"{u}fer angle
function.
\end{abstract}
~~~~~~~~~~\\
{\it{ 2000 Mathematics Subject Classification. Primary 34L15, 34B24. \\
Key words and phrases. Sturm-Liouville Problems, eigenvalue ratio,
single-barrier, Pr\"{u}fer substitution}.}
\section{Introduction}
Consider the Sturm-Liouville equation acting on $[0,1]$
\begin{equation} \label{1.1}
-y^{\prime \prime }+q(x)y=\lambda y,
\end{equation}%
with Dirichlet boundary conditions
\begin{equation}
y(0)=y(1)=0.  \label{1.2}
\end{equation}%
Here $q$ is a nonnegative differentiable and single-barrier
potential in $[0,1]$.\\ A function in $[0, 1]$ is called single-well
(single-barrier) if there is $x_0$ point a in $[0, 1]$, such that it
is monotone decreasing (increasing) in $[0,x_0]$ and monotone
increasing (decreasing) in $[x_0,1]$. The point $x_0$ is the
transition point.

It is known (see ~ \cite{5}) that the spectrum of Problem $
\eqref{1.1} - \eqref{1.2}$ consists of a growing sequence of
infinitely point $\lambda _{1}<\lambda _{2}<....<\lambda
_{n}...\infty$. Ashbaugh and Benguria \cite{1',1} proved the optimal
bound $\frac{\lambda_{n}}{\lambda_{1}}\leq n^{2},$ for nonnegative
potentials. They also established the ratio estimate  $
\frac{\lambda _{n}}{\lambda _{m}}\leq ( \lceil \frac{n}{m} \rceil )
^{2},$ for $n> m\geq1,$ where $\lceil s\rceil $ denotes the smallest
integer greater than or equal to $s$. Later, Huang and Law in
\cite{CK} proved that the eigenvalues of the regular Sturm-Liouville
equation $-(p(x)y^{\prime})^ {\prime }+q(x)y=\lambda \rho (x)y$
(with Dirichlet boundary conditions) satisfy the lower bound
$$\frac{\lambda_{n}}{\lambda_{m}}\geq\frac{1}{1+\xi}
\Big(\frac{n+1}{m+1}\Big)^2\frac{k}{K},~~n>m\geq0,$$ for $p\in
C^1[0,1],$ $q,\rho\in C[0,1]$, $q\geq0$ and $0<k\leq p(x)\rho(x)\leq
K,$ with $\xi=\frac{K\max{\{pq\}}}{kn^2\sigma^2\pi^2}$ and
$\sigma=\Big(\int_{0}^{1}\frac{1}{p(s)}ds\Big)^{-1}$. In $2005$,
Horv\'{a}th and Kiss \cite{3} showed that
\begin{eqnarray}\label{ratios} \frac{\lambda _{n}}{\lambda _{m}}\leq
\frac{n^{2}}{m^{2}},~~n>m\geq1,\end{eqnarray} for nonnegative
single-well potentials. Their approach is mainly based on the
monotonicity of the Pr\"{u}fer angle as function in $\lambda\geq0$
(see \cite[~Theorem ~2.2]{3}). At the end of their paper
\cite[~Remark~5.1]{3}, they gave an example of a single-barrier
potential which shows that the associated Pr\"{u}fer angle is not a
monotonous function.

 In the present paper, we give additional
conditions on the single-barrier potential $q(x)$ for which Theorem
2.2 in \cite{3} and the estimate \eqref{ratios} remain valid.
Namely,  we prove that if $q(x)$ is a nonnegative and single-barrier
potential (with transition point $x_{0}\in[0,1]$), such that
$|q'(x)|\leq q^{*}$ where $q^{*}=\frac{2}{15}\min\{q(0),~q(1)\}$,
then $\frac{\lambda _{n}}{\lambda _{m}}\leq \frac{n^{2}}{m^{2}}$ for
$\lambda _{n}>\lambda _{m}\geq 11q(x_{0})$. In particular, if $q(x)$
satisfies the additional condition $q(x_{0})\leq
\frac{\pi^{2}}{11}$, then the last bound estimate holds for $n>m\geq
1.$ Note that, our approach used in this paper can be applied to the
case of nonnegative and single-well potentials studied in \cite{3}
(without further restrictions on $q(x)$).


\section{Preliminaries And The Main Statement} \label{ps}
\label{required} Following \cite{3}, we introduce the modified
Pr\"{u}fer transformation.\\ Denote by $y( x,z ) $
 the unique solution of the initial value problem%
\begin{equation}\label{H.S}
-y^{\prime \prime }+q(x)y=z^{2}y,\ \ \ x\in [ 0,1] ,\ \ \ z>0,
\end{equation}%
\begin{equation}
y(0)=0, \ \ \  y^{\prime }(0)=1.  \label{J.R}
\end{equation}
The Pr\"{u}fer variables $r( x,z) ,$ $\varphi ( x,z) $ that we use
here are defined by
\begin{eqnarray}\label{2.2}
y(x,z) =r( x,z) \sin \varphi ( x,z),
\end{eqnarray}
\begin{eqnarray}\label{2.2a}
y^{\prime }( x,z) =zr( x,z) \cos \varphi ( x,z),
\end{eqnarray}
\begin{eqnarray}\label{2.2b}
\varphi ( 0,z) =0,
\end{eqnarray}
where $r(x,z)>0$, and then let $\theta(x,z) =\frac{\varphi(x,z) }{z}.$\\
We denote by prime (resp. dot) the derivative with respect to $x$
(resp. $z$). \\ Using Equation \eqref{1.1}  one finds the following
differential equations for $r( x,z) $ and $\varphi(x,z):$
\begin{equation} \label{2.6}
\varphi ^{\prime }=z-\frac{q}{z}\sin ^{2}\varphi ,
\end{equation} and
\begin{equation}\label{2.7}
\frac{r^{\prime }}{r}=\frac{q}{z}\sin \varphi \cos \varphi .
\end{equation}
It is obvious that $z^{2}$ is an eigenvalue iff $\varphi(\pi ,z) $
is a multiple of $\pi $. Denote by $z_{n}$ the square root of
$\lambda _{n}$ $( \lambda _{n}=z_{n}^{2}) $. Moreover by
\eqref{2.6}, $\varphi ^{\prime }(x,z)>0$ for $x\in[0,x_{0}]$ and
$z^2>q(x_{0}).$ In this case $\varphi ^{-1}$ exists and $\varphi
^{-1}(k\pi+\frac{\pi}{2})$, $\varphi ^{-1}((k+1)\pi)$ ($k\geq 0$ be
an integer) are the zeros of $y'$ and $y$ in $(0,x_{0}]$,
respectively. It is known (e.g., see \cite[~chap.1]{5}) that these
zeros are decreasing as $z$ increases. We now enunciate the main
result of this paper.

\begin{theorem}\label{the2}
For the Sturm-Liouville problem \eqref{1.1}-\eqref{1.2}, if $q(x)$
is a nonnegative differentiable and single-barrier potential with
transition point $x_{0}\in[0,1]$ such that $|q'(x)|\leq q^{*}$ where
$q^{*}=\frac{2}{15}\min\{q(0),~q(1)\}$, then
\begin{equation}
\frac{\lambda _{n}}{\lambda _{m}}\leq \frac{n^{2}}{m^{2}},~~for~
\lambda _{n}>\lambda _{m}\geq 11q(x_{0}).
\end{equation}
In particular, if $q(x)$ satisfies the additional condition
$q(x_{0})\leq \frac{\pi^{2}}{11},$ then $\frac{\lambda _{n}}{\lambda
_{m}}\leq \frac{n^{2} }{m^{2}},$ for \\$n>m\geq 1.$
\\If for two different $m$ and $n$ equality holds, then $q\equiv 0 $ in $[0,1].$
\end{theorem}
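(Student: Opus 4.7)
The plan is to prove the eigenvalue ratio bound by showing that the modified Pr\"{u}fer angle $\theta(1, z) = \varphi(1, z)/z$ is a non-decreasing function of $z$ on the range $z^{2} \geq 11 q(x_{0})$. Once this monotonicity is established, the identities $\varphi(1, z_{n}) = n\pi$ (from the Dirichlet condition at $x=1$ together with the oscillation count) and $\theta(1, z_{n}) = n\pi/z_{n}$ immediately give $m\pi/z_{m} = \theta(1, z_{m}) \leq \theta(1, z_{n}) = n\pi/z_{n}$ whenever $z_{n} > z_{m} \geq \sqrt{11 q(x_{0})}$, which rearranges to $\lambda_{n}/\lambda_{m} \leq n^{2}/m^{2}$. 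The special case $q(x_{0}) \leq \pi^{2}/11$ reduces to this one, since the standard Dirichlet lower bound $\lambda_{1} \geq \pi^{2}$ for $q \geq 0$ gives $\lambda_{m} \geq \pi^{2} \geq 11 q(x_{0})$ for every $m \geq 1$.

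To analyze the sign of $\dot\theta(1, z) = \partial_{z}\theta(1, z)$, I first integrate (2.6) to obtain
$$\theta(1, z) = 1 - \frac{1}{z^{2}} \int_{0}^{1} q(t) \sin^{2}\varphi(t, z) \, dt.$$
Differentiating in $z$, the derivative $\dot\varphi$ that appears satisfies the linear first-order equation
$$\dot\varphi' + \frac{q}{z} \sin(2\varphi)\, \dot\varphi = 1 + \frac{q}{z^{2}} \sin^{2}\varphi, \qquad \dot\varphi(0, z) = 0,$$
which can be solved explicitly using the integrating factor $(z\,r(x,z))^{2}$, because (2.7) gives $(r^{2})' = (q/z)\, r^{2} \sin(2\varphi)$. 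Substituting this representation back into the expression for $\dot\theta(1, z)$ recasts it as an explicit weighted integral whose sign I can examine.

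The next step is to split $[0, 1]$ at the transition point $x_{0}$. On $[0, x_{0}]$, $q$ is increasing with $q' \geq 0$, and the condition $z^{2} > q(x_{0}) \geq q(x)$ guarantees $\varphi' > 0$, so $\varphi$ serves as a clean change of variable and the single-well analysis in the spirit of \cite{3} carries over. On $[x_{0}, 1]$, where $q' \leq 0$, a time-reversal $x \mapsto 1-x$ reduces the analysis to a parallel setting. The quantitative hypotheses $|q'| \leq q^{*}$ and $z^{2} \geq 11 q(x_{0})$ are used to bound the variation of $q$ across half-periods of $\varphi$ (each of length $\pi/(2z)$ to leading order) and to control the relative size of the $q/z^{2}$ terms; the particular constants $11$ and $2/15$ are forced by the quantitative balance required for the positive contributions to dominate.

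The main technical obstacle will be the interval $[x_{0}, 1]$, where $q' \leq 0$ produces oscillatory contributions opposite in sign to those on $[0, x_{0}]$, and the argument of \cite{3} does not apply directly. I plan to group consecutive half-periods of $\varphi$ in pairs and, using $|q'| \leq q^{*}$ to compare values of $q$ on adjacent half-periods, estimate the net contribution on each pair so that non-negativity follows quadrant by quadrant. Finally, the equality statement comes from tracing all inequalities backward: equality in $\lambda_{n}/\lambda_{m} \leq n^{2}/m^{2}$ forces $\dot\theta(1, z) \equiv 0$ on $[z_{m}, z_{n}]$, which in turn forces $q(t) \sin^{2}\varphi(t, z) \equiv 0$ for those $z$; since $\varphi(t, z)$ varies nontrivially with $z$, this yields $q \equiv 0$ on $[0, 1]$.
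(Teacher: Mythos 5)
There is a genuine gap, and it sits exactly at the point you flag as ``the main technical obstacle.'' Your plan is to prove that $\theta(1,z)=\varphi(1,z)/z$ itself is non-decreasing in $z$ for $z^{2}\geq 11q(x_{0})$, handling $[x_{0},1]$ (where $q'\leq 0$) by ``a time-reversal'' and by ``grouping consecutive half-periods in pairs.'' Neither of these is carried out, and neither works as stated. The time-reversal does not reduce the interval $[x_{0},1]$ to the increasing case for a \emph{general} $z$: the reversed solution $\tilde y(x,z)$ of $-\tilde y''+q(1-x)\tilde y=z^{2}\tilde y$ with $\tilde y(0)=0$, $\tilde y'(0)=1$ is proportional to $y(1-x,z)$ only when $z$ is an eigenvalue (one needs $y(1,z)=0$ to match the initial condition), so its Pr\"ufer angle is unrelated to the continuation of $\varphi(\cdot,z)$ past $x_{0}$ except at $z=z_{n}$. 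As for the pairing of half-periods, the representation $\dot\theta(x,z)=\frac{2}{z^{2}r^{2}(x)}\int_{0}^{x}r^{2}\frac{q}{z}(\sin^{2}\varphi-\varphi\sin\varphi\cos\varphi)\,dt$ is controlled on $[0,x_{0}]$ precisely because $q$ is \emph{increasing} there (every boundary term and mean-value estimate in the paper's Lemmas 3.2--3.5 uses $q'\geq 0$ with the correct sign); on $[x_{0},1]$ those signs reverse, the factor $\varphi$ is large, and there is no argument given that the net contribution of each period is nonnegative. Indeed the paper explicitly recalls (Remark 5.1 of Horv\'ath--Kiss) that for single-barrier potentials the Pr\"ufer angle need not be monotone, which is why it does not attempt to prove monotonicity of $\theta(1,z)$ at all.

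The paper's route avoids this entirely: it proves monotonicity of $\theta(x_{0},z)$ for the original problem (Theorem 3.1, which only sees the increasing piece of $q$) and, separately, monotonicity of $\tilde\theta(1-x_{0},z)$ for the \emph{reversed} problem with potential $\tilde q(x)=q(1-x)$ (which is increasing on $[0,1-x_{0}]$, so the same theorem applies). It then forms $\Psi(z)=\theta(x_{0},z)+\tilde\theta(1-x_{0},z)$, which is increasing as a sum of increasing functions, and uses the identity $\tilde\theta(x,z_{n})=\frac{n\pi}{z_{n}}-\theta(1-x,z_{n})$, valid \emph{at eigenvalues only}, to get $z_{n}\Psi(z_{n})=n\pi$; the ratio bound follows. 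Note that $\Psi(z)\neq\theta(1,z)$ for non-eigenvalue $z$, so the paper never needs the statement you are trying to prove. Your setup on $[0,x_{0}]$ (the ODE for $\dot\varphi$, the integrating factor $r^{2}$, the quantitative use of $|q'|\leq q^{*}$ and $z^{2}\geq 11q(x_{0})$) is consistent with the paper's Section 3, and your reductions of the special case $q(x_{0})\leq\pi^{2}/11$ and of the equality case are essentially right, but without the reversed-problem device (or a genuinely new estimate on $[x_{0},1]$) the proof does not close.
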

The proof of Theorem \ref{the2} will be given in Section \ref{4}.


\section{Monotonicity Of The Pr\"{u}fer Angle Function $\theta(x_{0},z).$}
 In this section, we study the monotonicity
of the Pr\"{u}fer angle function $\theta(x_{0},z).$
\begin{theorem}\label{the1}
Let $q(x)\geq 0$ be monotone increasing and differentiable in $[
0,x_{0}]$ such that $q'(x)\leq \frac{2}{15}q(0)$. Then
$\dot{\theta}( x_{0},z) \geq 0$ for\ $z\geq \sqrt{11q(x_{0})}.$ If
there is a $z\geq \sqrt{11q(x_{0})}$ with $\dot{\theta}( x_{0},z)
=0,$ then $q\equiv 0$ in $[0,x_{0}]$.
\end{theorem}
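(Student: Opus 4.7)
The plan is to derive an explicit integral representation for $z^2\dot\theta(x_0,z)$ and then show that it is nonnegative using the hypotheses on $q$ and $z$. To start, I would introduce $\Psi(x,z) := z\dot\varphi - \varphi = z^2\dot\theta$ and differentiate \eqref{2.6} in $z$, obtaining the first-order linear equation
\[
\Psi' + \frac{q\sin 2\varphi}{z}\,\Psi = \frac{q}{z}\bigl(2\sin^2\varphi - \varphi\sin 2\varphi\bigr),\qquad \Psi(0,z) = 0.
\]
By \eqref{2.7}, $z^2 r^2(x,z)$ is an integrating factor for this equation, so $\Psi(x_0,z)$ admits an explicit representation as a weighted integral of $qr^2(2\sin^2\varphi-\varphi\sin 2\varphi)$ over $[0,x_0]$.

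Because $\varphi\sin 2\varphi$ has no definite sign, I would next rewrite that integrand using $qr^2\sin 2\varphi = z(r^2)'$ from \eqref{2.7} and integrate by parts, substituting $\varphi'=z-(q/z)\sin^2\varphi$ and $y=r\sin\varphi$. With $\Phi:=\varphi(x_0,z)$, this simplifies to
\[
z^2\dot\theta(x_0,z) = \frac{\sin 2\Phi}{2} - \Phi + \frac{2z}{r^2(x_0,z)}\int_0^{x_0} y^2(x,z)\,dx.
\]
Writing $\tfrac{1}{2}\sin 2\Phi - \Phi = -\int_0^\Phi 2\sin^2 u\,du$ and pulling the integral back to $[0,x_0]$ via $u=\varphi(x,z)$, this rearranges into the central identity
\[
z^2\dot\theta(x_0,z) = 2z\int_0^{x_0}\sin^2\varphi(x)\!\left[\frac{r^2(x,z)}{r^2(x_0,z)}-1\right]dx + \frac{2}{z}\int_0^{x_0} q(x)\sin^4\varphi(x)\,dx. \qquad (\ast)
\]

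The next step is to show that $(\ast)\ge 0$. Its second term is already nonnegative; the first may be negative on subintervals where $r^2(x,z)<r^2(x_0,z)$. Writing $r^2(x,z)/r^2(x_0,z)=\exp(-I(x))$ with $I(x):=\int_x^{x_0}(q/z)\sin 2\varphi(s)\,ds$, the hypothesis $z^2\ge 11 q(x_0)$, combined with $q\le q(x_0)$ on $[0,x_0]$, yields $\varphi'\in[10z/11,\,z]$, so $\varphi$ is a diffeomorphism of $[0,x_0]$ onto $[0,\Phi]$ that is close to the linear map $x\mapsto zx$. I would then integrate $I(x)$ by parts against the rapidly oscillating $\sin 2\varphi$ via $\sin 2\varphi = -(2\varphi')^{-1}(\cos 2\varphi)'$; the boundary contribution has size $O(q(x_0)/z^2)$ and the bulk remainder, controlled by the hypothesis $|q'|\le q^\ast = (2/15)q(0)$, has size $O(q^\ast x_0/z^2)$. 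Since $|1-e^{-I}|\le (1+O(|I|))|I|$ for small $|I|$, this bounds the magnitude of the negative contribution in $(\ast)$; matching it against the lower bounds $q\ge q(0)$ and $\int_0^{x_0}\sin^4\varphi\,dx\ge \tfrac{3}{8}x_0 - O(1/z)$ on the second term then delivers $(\ast)\ge 0$.

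The equality case follows by strict positivity: if $q\not\equiv 0$ on $[0,x_0]$ then $\int_0^{x_0}q\sin^4\varphi\,dx>0$, and the preceding estimate shows that this strictly dominates the negative term in $(\ast)$, forcing $(\ast)>0$; contrapositively, $\dot\theta(x_0,z)=0$ implies $q\equiv 0$. The main technical obstacle is precisely the sharp oscillatory estimate on $I(x)$ in the previous step: because $z^2$ multiplies $I$ in $(\ast)$, the negative term is of the same order as the positive reserve from the second, so $(\ast)\ge 0$ is borderline. The specific constants $1/11$ and $2/15$ in the hypotheses must be calibrated exactly so the estimate survives at the worst-case residual phase on the incomplete terminal half-wave of $\sin 2\varphi$.
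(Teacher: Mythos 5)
Your central identity $(\ast)$ is correct --- I verified it: from $(r^2\dot\varphi)'=r^2\bigl(1+\frac{q}{z^2}\sin^2\varphi\bigr)$ one gets $z^2\dot\theta(x_0,z)=\frac{z}{r^2(x_0)}\int_0^{x_0}r^2\,dx+\frac{1}{zr^2(x_0)}\int_0^{x_0}qy^2\,dx-\Phi$; writing $r^2=y^2+(y')^2/z^2$ and integrating $(y')^2$ by parts against $y''=(q-z^2)y$ yields your intermediate formula, and the substitution $u=\varphi(x)$ in $\tfrac12\sin 2\Phi-\Phi=-2\int_0^\Phi\sin^2u\,du$ gives $(\ast)$, which is also consistent with Lemma 3.1. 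This is a genuinely different organization from the paper, which keeps the representation of Lemma 3.1 and splits $[0,x_0]$ into the half-waves $[\varphi^{-1}(i\pi+\frac{\pi}{2}),\varphi^{-1}((i+1)\pi+\frac{\pi}{2})]$, proving nonnegativity wave by wave through series expansions in $q/z^2$ and mean-value arguments (Lemmas 3.2--3.5). Your form isolates a manifestly nonnegative reserve $\frac{2}{z}\int_0^{x_0}q\sin^4\varphi\,dx$ against a single oscillatory error term, which is structurally cleaner and would, if completed, be a nicer proof.

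However, the decisive inequality $(\ast)\ge 0$ is not actually established, and the one concrete estimate you propose does not close. The boundary term of your integration by parts gives $\sup_x|I(x)|\approx\frac{11}{10}\frac{q(x_0)}{z^2}$, so the bound $|1-e^{-I}|\le(1+O(|I|))|I|$ only shows the first term of $(\ast)$ is bounded below by roughly $-2z\cdot\frac{x_0}{2}\cdot\frac{11}{10}\frac{q(x_0)}{z^2}=-\frac{11\,q(x_0)x_0}{10z}$, while the reserve is $\approx\frac{3q(0)x_0}{4z}$; since $q$ is increasing, $q(x_0)\ge q(0)$, and the comparison fails by a factor of about $3/2$. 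To win you must exploit the sign of $I$, not its magnitude: the bulk contribution $-\int_0^{x_0}\frac{q}{\varphi'}\sin^2\varphi\cos 2\varphi\,dx$ has favorable (positive) average because the mean of $-\sin^2u\cos 2u$ over a period is $\tfrac14$, and only the terminal boundary term $\frac{q(x_0)\cos 2\Phi}{\varphi'(x_0)}\int_0^{x_0}\sin^2\varphi\,dx$ is genuinely adversarial. You would further need to treat separately (i) the regime $\varphi(x_0,z)<\pi$, where your lower bound $\int_0^{x_0}\sin^4\varphi\,dx\ge\frac{3}{8}x_0-O(1/z)$ is vacuous (there the pointwise inequality $\sin\varphi\cos\varphi(\tan\varphi-\varphi)\ge0$ settles the matter, as in the paper), and (ii) the $O(q(x_0)/z^2)$ oscillatory remainders that carry no factor of $x_0$ and are of the same order as the reserve precisely when $zx_0=O(1)$. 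You correctly identify this calibration as the main obstacle but do not carry it out; since the entire content of the theorem is exactly this calibration --- it is what the hypotheses $z^2\ge 11q(x_0)$ and $q'\le\frac{2}{15}q(0)$ exist to make possible --- the proof is incomplete. The equality case inherits the same gap, as it relies on the unproven strict domination of the negative term.
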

To prove this theorem we need some preliminary results.
\begin{lemma} \label{cor1} (Corollary~ 3.3 in \cite{3})
\begin{equation}\label{2.9}
\dot{\theta}( x,z) =\frac{2}{z^{2}r^{2}(x)}\int_{0}^{x}r^{2}(t)%
\frac{q(t)}{z}\Big( \sin ^{2}\varphi -\varphi \sin \varphi \cos
\varphi \Big) dt.
\end{equation}
\end{lemma}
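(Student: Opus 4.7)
The plan is to derive the integral representation by converting the problem to a linear first-order ODE in $x$ for the auxiliary quantity $F(x,z):=z^{2}\dot\theta(x,z)=z\dot\varphi-\varphi$, whose integrating factor will turn out to be $r^{2}(x)$. Since $\varphi(0,z)\equiv 0$ forces $\dot\varphi(0,z)=0$, the initial value is $F(0,z)=0$, which matches the vanishing of the right-hand side of \eqref{2.9} at $x=0$. Matching a first-order linear ODE together with a matching initial condition will pin down $F$ uniquely and produce the identity.

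To obtain the ODE for $F$, I would differentiate the phase equation $\varphi'=z-(q/z)\sin^{2}\varphi$ with respect to $z$, getting
$$\dot\varphi' \;=\; 1 + \frac{q}{z^{2}}\sin^{2}\varphi - \frac{2q}{z}\sin\varphi\cos\varphi\cdot\dot\varphi.$$
Multiplying by $z$ and subtracting the original equation for $\varphi'$ cancels the constant term and produces
$$F' \;=\; \frac{2q}{z}\sin^{2}\varphi - 2q\sin\varphi\cos\varphi\cdot\dot\varphi.$$
Substituting $\dot\varphi=(F+\varphi)/z$ and using the amplitude equation \eqref{2.7} in the form $(r^{2})'/r^{2}=(2q/z)\sin\varphi\cos\varphi$ rearranges this into the linear equation
$$F' + \frac{(r^{2})'}{r^{2}}\,F \;=\; \frac{2q}{z}\bigl(\sin^{2}\varphi - \varphi\sin\varphi\cos\varphi\bigr).$$

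At this point $r^{2}(x)$ is manifestly the integrating factor, so $(r^{2}F)'$ equals $r^{2}$ times the right-hand side. Integrating from $0$ to $x$ and invoking $F(0,z)=0$ gives
$$r^{2}(x)\,F(x,z) \;=\; 2\int_{0}^{x} r^{2}(t)\,\frac{q(t)}{z}\bigl(\sin^{2}\varphi-\varphi\sin\varphi\cos\varphi\bigr)\,dt,$$
which, after dividing by $z^{2}r^{2}(x)$ and recalling $F=z^{2}\dot\theta$, is exactly the claimed identity \eqref{2.9}. The only genuinely clever step is recognising that the coefficient of $\dot\varphi$ produced by $z$-differentiation is precisely the logarithmic derivative of $r^{2}$; once this is spotted, the integrating factor is forced and the rest of the argument is a single differentiation followed by a single integration with a clean initial condition. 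I expect no real obstacle beyond careful bookkeeping of the $z$-dependence, particularly verifying $\dot\varphi(0,z)=0$ and ensuring the $\int_0^x r^2\,dt$ terms cancel between $z\dot\varphi$ and $\varphi$ — which is automatic once everything is packaged through $F$.
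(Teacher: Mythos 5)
Your derivation is correct: differentiating the phase equation \eqref{2.6} in $z$, packaging everything into $F=z\dot\varphi-\varphi=z^{2}\dot\theta$, and recognising via \eqref{2.7} that the coefficient $\frac{2q}{z}\sin\varphi\cos\varphi$ of $F$ is exactly $(r^{2})'/r^{2}$ yields $(r^{2}F)'=2r^{2}\frac{q}{z}(\sin^{2}\varphi-\varphi\sin\varphi\cos\varphi)$, and the initial condition $F(0,z)=0$ (from $\varphi(0,z)\equiv 0$) gives \eqref{2.9} on integration. The paper itself offers no proof of this lemma --- it is quoted as Corollary 3.3 of Horv\'{a}th and Kiss \cite{3} --- so there is nothing internal to compare against; your argument is the standard derivation used there, and it is complete.
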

The following result plays a fundamental role in the sequel.
\begin{lemma}\label{Lem4} Let $i\geq0$ be an integer and assume
that $\varphi ^{-1}( i\pi +\frac{\pi }{2}+D)\in(0,x_{0}],$ where
$0\leq D\leq\pi.$ Then
\begin{eqnarray}\label{novo}
&&\int_{\varphi ^{-1}( i\pi +\frac{\pi }{2}) }^{\varphi ^{-1}(i\pi
+\frac{\pi }{2}+D)}r^{2}(x) \frac{q(x)}{z}\Big( \sin^{2}\varphi
-\varphi\sin\varphi\cos\varphi\Big) dx \cr &&\geq r^{2}(\varphi
^{-1}(i\pi +\frac{\pi }{2}+D))\Big[ \int_{\varphi ^{-1}( i\pi
+\frac{\pi }{2}) }^{\varphi ^{-1}(i\pi +\frac{\pi
}{2}+D)}\frac{q(x)}{z}\Big( \sin^{2}\varphi
-\varphi\sin\varphi\cos\varphi\Big) dx \cr &&-2\int_{\varphi ^{-1}(
i\pi +\frac{\pi }{2}) }^{\varphi ^{-1}( i\pi +\frac{\pi
}{2}+D)}\frac{q(x)}{z}\sin\varphi\cos\varphi
\Big(\int_{\varphi^{-1}(i\pi+\frac{\pi}{2})}^{x}\frac{q(s)}{z}
\sin^{2}\varphi ds \Big)dx \Big].
\end{eqnarray}
\end{lemma}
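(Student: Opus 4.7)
The plan is to represent $r^{2}(x)$ explicitly in terms of $r^{2}(b)$, where $b:=\varphi^{-1}(i\pi+\pi/2+D)$, and thereby reduce the claimed bound \eqref{novo} to a scalar inequality. First I would integrate the amplitude equation \eqref{2.7} from $x$ to $b$ to obtain
\[
r^{2}(x)=r^{2}(b)\,e^{\alpha(x)},\qquad \alpha(x):=-2\int_{x}^{b}\frac{q(s)}{z}\sin\varphi\cos\varphi\,ds,
\]
so that $\alpha(b)=0$ and $\alpha'(x)=2\frac{q(x)}{z}\sin\varphi\cos\varphi$. Next, by Fubini applied to the nested integral on the right of \eqref{novo},
\[
2\int_{a}^{b}\frac{q(x)}{z}\sin\varphi\cos\varphi\int_{a}^{x}\frac{q(s)}{z}\sin^{2}\varphi\,ds\,dx=-\int_{a}^{b}\alpha(x)\frac{q(x)}{z}\sin^{2}\varphi\,dx,
\]
where $a:=\varphi^{-1}(i\pi+\pi/2)$. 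Substituting these identities into \eqref{novo}, dividing by $r^{2}(b)>0$, and regrouping using $\sin^{2}\varphi-\varphi\sin\varphi\cos\varphi=\sin^{2}\varphi-\varphi(\sin\varphi\cos\varphi)$, I would reduce the claim to
\[
\int_{a}^{b}\bigl(e^{\alpha(x)}-1-\alpha(x)\bigr)\frac{q(x)}{z}\sin^{2}\varphi\,dx\;\geq\;\int_{a}^{b}\bigl(e^{\alpha(x)}-1\bigr)\varphi(x)\frac{q(x)}{z}\sin\varphi\cos\varphi\,dx. \qquad (\ast)
\]

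Once $(\ast)$ is in place, the estimates should be routine. The left-hand side of $(\ast)$ is non-negative by the elementary convexity bound $e^{y}\geq 1+y$ (valid for every real $y$) together with $\frac{q}{z}\sin^{2}\varphi\geq 0$. For the right-hand side I would introduce $\beta(x):=e^{\alpha(x)}-1-\alpha(x)\geq 0$; the chain rule then gives $\beta'=\alpha'(e^{\alpha}-1)=2\frac{q}{z}\sin\varphi\cos\varphi\,(e^{\alpha}-1)$, so the integrand on the right of $(\ast)$ equals $\frac{1}{2}\varphi\beta'$. An integration by parts produces the boundary term $\frac{1}{2}[\varphi\beta]_{a}^{b}=-\frac{1}{2}(i\pi+\pi/2)\beta(a)\leq 0$ (since $\beta(b)=0$ while $\beta(a)\geq 0$) and the residual $-\frac{1}{2}\int_{a}^{b}\varphi'\beta\,dx\leq 0$, the latter because $\beta\geq 0$ and $\varphi'\geq 0$ on $[0,x_{0}]$ by \eqref{2.6} whenever $z^{2}>q(x_{0})$, a condition implicit here in the existence of $\varphi^{-1}$ on $[0,x_{0}]$. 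Hence the right-hand side of $(\ast)$ is $\leq 0$ while the left-hand side is $\geq 0$, and $(\ast)$ follows.

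The main obstacle is identifying the right reduction to $(\ast)$, not carrying out the estimates: one has to combine the exponential representation of $r^{2}$, Fubini on the nested integral, and the algebraic splitting $\sin^{2}\varphi-\varphi\sin\varphi\cos\varphi=\sin^{2}\varphi-\varphi(\sin\varphi\cos\varphi)$ in precisely the right way so that the leftover cross term appears as a total derivative of $\beta$. Once that happens, the monotonicity of the Pr\"ufer angle $\varphi$ on $[0,x_{0}]$ delivers the correct sign through integration by parts, and the convexity of the exponential takes care of the remaining half.
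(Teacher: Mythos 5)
Your proof is correct and takes essentially the same route as the paper: after dividing by $r^{2}(b)$, the two halves of your inequality $(\ast)$ --- $\int_{a}^{b}\big(e^{\alpha}-1-\alpha\big)\frac{q}{z}\sin^{2}\varphi\,dx\geq 0$ and $\int_{a}^{b}\big(e^{\alpha}-1\big)\varphi\,\frac{q}{z}\sin\varphi\cos\varphi\,dx\leq 0$ --- are precisely the paper's estimates \eqref{he} and \eqref{ge}, obtained there from the same ingredients (the representation $r^{2}(x)=r^{2}(b)e^{\alpha(x)}$ coming from \eqref{2.7}, the bound $e^{y}\geq 1+y$, Fubini, and the signs $\varphi'\geq 0$, $\varphi\geq 0$). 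The only divergence is that for the cross term you integrate the exact identity $\frac{d}{dx}\big(e^{\alpha}-1-\alpha\big)=2\frac{q}{z}\sin\varphi\cos\varphi\,\big(e^{\alpha}-1\big)$ by parts once, whereas the paper linearizes the exponential a second time and integrates by parts twice; both yield the same conclusion.
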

\begin{proof} By \eqref{2.7},
\begin{eqnarray*}
&&- \int_{\varphi ^{-1}(i\pi +\frac{\pi }{2}) }^{\varphi ^{-1} (i\pi
+\frac{\pi }{2}+D)}{\textstyle r^{2}(x) \frac{q(x)}{z}\varphi
\sin\varphi\cos\varphi dx}\cr&&=-\int_{\varphi ^{-1}( i\pi
+\frac{\pi }{2}) }^{\varphi ^{-1}(i\pi +\frac{\pi }{2}+D)}r(x)r'(x)
\varphi dx\cr&=&-\frac{1}{2}\Big[r^{2}(x)\varphi(x)\Big]_{\varphi
^{-1}( i\pi +\frac{\pi }{2}) }^{\varphi ^{-1}(i\pi +\frac{\pi
}{2}+D)}+\frac{1}{2} \int_{\varphi ^{-1}( i\pi +\frac{\pi }{2})
}^{\varphi ^{-1}(i\pi +\frac{\pi }{2}+D)}{\textstyle
r^{2}(x)\varphi'(x)dx}.
\end{eqnarray*}
Since $r^{2}(x)=e^{2\int_{0}^{x}\frac{q(s)}{z}\sin\varphi\cos\varphi
ds}$, then
\begin{eqnarray}\label{th}
&&- \int_{\varphi ^{-1}(i\pi +\frac{\pi }{2}) }^{\varphi ^{-1} (i\pi
+\frac{\pi }{2}+D)}{\textstyle r^{2}(x) \frac{q(x)}{z}\varphi
\sin\varphi\cos\varphi dx} \cr &=&-\frac{1}{2}\Big[(i\pi +\frac{\pi
}{2}+D)r^{2}(\varphi^{-1}(i\pi +\frac{\pi
}{2}+D)-(i\pi+\frac{\pi}{2})r^{2}(\varphi^{-1}({i\pi}+\frac{\pi}{2}))\Big]
\cr&&+\frac{1}{2}\int_{\varphi ^{-1}(i\pi +\frac{\pi }{2})
}^{\varphi ^{-1}(i\pi +\frac{\pi }{2}+D)} r^{2}(x)\varphi'(x) dx
\cr&=&-\frac{1}{2}r^{2}(\varphi^{-1}(i\pi +\frac{\pi
}{2}+D)\Big[D-\int_{\varphi ^{-1}(i\pi +\frac{\pi }{2}) }^{\varphi
^{-1}(i\pi +\frac{\pi }{2}+D)}
\frac{r^{2}(x)}{r^{2}(\varphi^{-1}(i\pi +\frac{\pi
}{2}+D))}\varphi'(x)
dx\Big]\cr&&-(\frac{i\pi+\frac{\pi}{2}}{2})r^{2}(\varphi^{-1}(i\pi
+\frac{\pi }{2}+D)) \Big[1-\frac{r^{2}(\varphi ^{-1}( i\pi
+\frac{\pi }{2}))}{r^{2}(\varphi^{-1}(i\pi +\frac{\pi }{2}+D))}\Big]
\cr&=&-\frac{1}{2}r^{2}(\varphi^{-1}(i\pi +\frac{\pi
}{2}+D)\Big[D-\int_{\varphi ^{-1}( i\pi +\frac{\pi }{2}) }^{\varphi
^{-1}(i\pi +\frac{\pi }{2}+D)} \varphi'(x)g(x,z) dx\Big]
\cr&-&(\frac{i\pi+\frac{\pi}{2}}{2})r^{2}(\varphi^{-1}(i\pi
+\frac{\pi }{2}+D)) \Big[1-g(\varphi ^{-1}( i\pi +\frac{\pi
}{2}),z)\Big],
\end{eqnarray}
where $g(x,z)=e^{-2\int_{x}^{\varphi ^{-1}(i\pi +\frac{\pi }{2}+D)}
\frac{q(s)}{z}\sin\varphi\cos\varphi ds}$. \\Using the inequality
\begin{eqnarray*}&&g(x,z)=e^{-2\int_{x}^{\varphi ^{-1}(i\pi
+\frac{\pi }{2}+D)} \frac{q(s)}{z}\sin\varphi\cos\varphi
ds}\cr&&\geq1-2\int_{x}^{\varphi ^{-1}(i\pi +\frac{\pi }{2}+D)}
\frac{q(s)}{z}\sin\varphi\cos\varphi ds,\end{eqnarray*} we obtain
\begin{eqnarray}\label{r1}
&&-\frac{1}{2}r^{2}(\varphi^{-1}(i\pi +\frac{\pi
}{2}+D)\Big[D-\int_{\varphi ^{-1}( i\pi +\frac{\pi }{2}) }^{\varphi
^{-1}(i\pi +\frac{\pi }{2}+D)} \varphi'(x)g(x,z) dx\Big]\cr&&\geq
-r^{2}(\varphi ^{-1}(i\pi +\frac{\pi }{2}+D))\int_{\varphi ^{-1}(
i\pi +\frac{\pi }{2}) }^{\varphi ^{-1}(i\pi +\frac{\pi }{2}+D)}
{\textstyle\varphi'(x)\Big(\int_{x}^{\varphi ^{-1}(i\pi +\frac{\pi
}{2}+D)}\frac{q(s)}{z}\sin\varphi\cos\varphi ds\Big)
dx}\cr&&=r^{2}(\varphi ^{-1}(i\pi +\frac{\pi }{2}+D))\Big(
-\Big[\varphi(x)\int_{x}^{\varphi ^{-1}(i\pi +\frac{\pi
}{2}+D)}\frac{q(s)}{z}\sin\varphi\cos\varphi ds\Big]_{\varphi ^{-1}(
i\pi +\frac{\pi }{2}) }^{\varphi ^{-1}(i\pi +\frac{\pi
}{2}+D)}\cr&&-\int_{\varphi ^{-1}( i\pi +\frac{\pi }{2}) }^{\varphi
^{-1}(i\pi +\frac{\pi }{2}+D)}
\frac{q(x)}{z}\varphi(x)\sin\varphi\cos\varphi
dx\Big)\cr&=&r^{2}(\varphi ^{-1}(i\pi +\frac{\pi
}{2}+D))\Big((i\pi+\frac{\pi}{2}) \int_{\varphi ^{-1}( i\pi
+\frac{\pi }{2}) }^{\varphi ^{-1}(i\pi +\frac{\pi
}{2}+D)}\frac{q(x)}{z}\sin\varphi\cos\varphi dx\cr&&-\int_{\varphi
^{-1}( i\pi +\frac{\pi }{2}) }^{\varphi ^{-1}(i\pi +\frac{\pi
}{2}+D)} \frac{q(x)}{z}\varphi(x)\sin\varphi\cos\varphi dx.
\end{eqnarray}
Similarly,
\begin{eqnarray}\label{g}
 &&-(\frac{i\pi+\frac{\pi}{2}}{2})r^{2}(\varphi^{-1}(i\pi
+\frac{\pi }{2}+D)\Big[1-g(\varphi ^{-1}( i\pi +\frac{\pi
}{2}),z)\Big]\cr&&\geq
(\frac{i\pi+\frac{\pi}{2}}{2})r^{2}(\varphi^{-1}(i\pi +\frac{\pi
}{2}+D)\int_{\varphi ^{-1}( i\pi +\frac{\pi }{2}) }^{\varphi
^{-1}(i\pi +\frac{\pi }{2}+D)}\frac{q(x)}{z}\sin\varphi\cos\varphi
dx.
\end{eqnarray}
Using \eqref{r1} and \eqref{g},
\begin{eqnarray}\label{ge}
&&- \int_{\varphi ^{-1}(i\pi +\frac{\pi }{2}) }^{\varphi ^{-1} (i\pi
+\frac{\pi }{2}+D)}{\textstyle r^{2}(x) \frac{q(x)}{z}\varphi
\sin\varphi\cos\varphi dx} \cr&&\geq -r^{2}(\varphi^{-1}(i\pi
+\frac{\pi }{2}+D)\int_{\varphi ^{-1}( i\pi +\frac{\pi }{2})
}^{\varphi ^{-1}(i\pi +\frac{\pi }{2}+D)}
\frac{q(x)}{z}\varphi(x)\sin\varphi\cos\varphi dx.
\end{eqnarray}
 On the other hand,
\begin{eqnarray*}
&&\int_{\varphi ^{-1}( i\pi +\frac{\pi }{2}) }^{\varphi ^{-1}(i\pi
+\frac{\pi }{2}+D)}r^{2}(x) \frac{q(x)}{z} \sin^{2}\varphi dx \cr&&=
r^{2}(\varphi ^{-1}(i\pi +\frac{\pi }{2}+D)) \int_{\varphi ^{-1}(
i\pi +\frac{\pi }{2}) }^{\varphi ^{-1}(i\pi +\frac{\pi
}{2}+D)}\frac{q(x)}{z}\sin^{2}\varphi(x) g(x,z)dx\cr &&\geq
r^{2}(\varphi ^{-1}(i\pi +\frac{\pi }{2}+D))\Big[ \int_{\varphi
^{-1}( i\pi +\frac{\pi }{2}) }^{\varphi ^{-1}(i\pi +\frac{\pi
}{2}+D)}{\textstyle\frac{q(x)}{z}\sin^{2}\varphi dx }\cr \cr&&-2
\int_{\varphi ^{-1}( i\pi +\frac{\pi }{2}) }^{\varphi ^{-1}(i\pi
+\frac{\pi }{2}+D)}{\textstyle \frac{q(x)}{z}\sin^{2}\varphi
\Big(\int_{x}^{\varphi^{-1}(i\pi +\frac{\pi }{2}+D)}\frac{q(s)}{z}
\sin\varphi\cos\varphi ds \Big)dx}.
\end{eqnarray*}
Integrating by parts, yields
\begin{eqnarray*}
&&-2 \int_{\varphi ^{-1}( i\pi +\frac{\pi }{2}) }^{\varphi
^{-1}(i\pi +\frac{\pi }{2}+D)}{\textstyle
\frac{q(x)}{z}\sin^{2}\varphi \Big(\int_{x}^{\varphi^{-1}(i\pi
+\frac{\pi }{2}+D)}\frac{q(s)}{z} \sin\varphi\cos\varphi ds
\Big)dx}\cr&&={\textstyle-2\Big[\int_{\varphi ^{-1}( i\pi +\frac{\pi
}{2}) }^{x}\frac{q(s)}{z}\sin^{2}\varphi ds\int_{x}^{\varphi
^{-1}(i\pi +\frac{\pi }{2}+D)}\frac{q(s)}{z}\sin\varphi\cos\varphi
ds\Big]}_{\varphi ^{-1}( i\pi +\frac{\pi }{2}) }^{\varphi ^{-1}(i\pi
+\frac{\pi }{2}+D)} \cr \cr&&-2 \int_{\varphi ^{-1}( i\pi +\frac{\pi
}{2}) }^{\varphi ^{-1}(i\pi +\frac{\pi
}{2}+D)}{\textstyle\frac{q(x)}{z} \sin\varphi\cos\varphi
 \Big(\int_{\varphi
^{-1}( i\pi +\frac{\pi }{2}) }^{x}\frac{q(s)}{z}\sin^{2}\varphi ds
\Big)dx}\cr&&=-2 \int_{\varphi ^{-1}( i\pi +\frac{\pi }{2})
}^{\varphi ^{-1}(i\pi +\frac{\pi }{2}+D)}{\textstyle\frac{q(x)}{z}
\sin\varphi\cos\varphi
 \Big(\int_{\varphi^{-1}( i\pi +\frac{\pi }{2})}^{x}\frac{q(s)}{z}\sin^{2}\varphi ds
\Big)dx.}
\end{eqnarray*}
Therefore,
\begin{eqnarray}\label{he}
&&\int_{\varphi ^{-1}( i\pi +\frac{\pi }{2}) }^{\varphi ^{-1}(i\pi
+\frac{\pi }{2}+D)}r^{2}(x) \frac{q(x)}{z} \sin^{2}\varphi dx
\cr&&\geq r^{2}(\varphi ^{-1}(i\pi +\frac{\pi
}{2}+D))\Big[\int_{\varphi ^{-1}( i\pi +\frac{\pi }{2}) }^{\varphi
^{-1}(i\pi +\frac{\pi
}{2}+D)}{\textstyle\frac{q(x)}{z}\sin^{2}\varphi dx }\cr \cr&&-2
\int_{\varphi ^{-1}( i\pi +\frac{\pi }{2}) }^{\varphi ^{-1}(i\pi
+\frac{\pi }{2}+D)}{\textstyle\frac{q(x)}{z} \sin\varphi\cos\varphi
 \Big(\int_{\varphi
^{-1}( i\pi +\frac{\pi }{2}) }^{x}\frac{q(s)}{z}\sin^{2}\varphi ds
\Big)dx}.
\end{eqnarray}

 From \eqref{ge} and \eqref{he}, we get \eqref{novo}.
 This completes the proof of the lemma.
\end{proof}
Using the substitution $t=\varphi(x)-(i+1)\pi$ $(i\geq0$), we obtain
the following result:
\begin{lemma}\label{ty} Assume that $\varphi ^{-1}(i\pi
+\frac{\pi }{2}+D)\in(0,x_{0}]$
 $(i\geq0,~0\leq D\leq\pi),$ then
\begin{eqnarray} \label{jh}
&&\int_{\varphi ^{-1}( i\pi +\frac{\pi }{2}) }^{\varphi ^{-1}( i\pi
+\frac{\pi }{2}+D)}\frac{q(x)}{z}\Big(\sin^{2}\varphi-\varphi\sin
\varphi \cos \varphi \Big)dx\cr&&=\int_{-\frac{\pi }{2}}^{-\frac{\pi
}{2}+D}\frac{\frac{q(\varphi ^{-1}(t+(i+1)\pi))}{z^{2}}
\Big(\sin^{2}t-t\sin t\cos t\Big)}{1-\frac{q(\varphi
^{-1}(t+(i+1)\pi))}{z^{2}}\sin^{2}t}dt\cr&&-(i+1)\pi\int_{-\frac{\pi
}{2}}^{-\frac{\pi }{2}+D} \frac{\frac{q(\varphi
^{-1}(t+(i+1)\pi))}{z^{2}} \sin t\cos t}{1-\frac{q(\varphi
^{-1}(t+(i+1)\pi))}{z^{2}}\sin^{2}t}dt.
\end{eqnarray}
\end{lemma}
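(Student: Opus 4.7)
The plan is to prove Lemma \ref{ty} by a direct change of variable. Because we work in the regime $z^2 > q(x_0)$, equation \eqref{2.6} gives $\varphi'(x) > 0$ on $[0,x_0]$, so $\varphi$ is strictly increasing and invertible there. Thus the map $t = \varphi(x) - (i+1)\pi$ is a $C^1$-diffeomorphism from $[\varphi^{-1}(i\pi + \pi/2),\, \varphi^{-1}(i\pi+\pi/2+D)]$ onto $[-\pi/2,\, -\pi/2+D]$; it is exactly this substitution that I plan to apply.

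First I would record the three identities that make the substitution work. Writing $\varphi = t + (i+1)\pi$, the $\pi$-periodicity of $\sin^2$ and of the product $\sin\cos$ gives
\begin{equation*}
\sin^2\varphi = \sin^2 t, \qquad \sin\varphi\cos\varphi = \sin t \cos t,
\end{equation*}
while the factor $\varphi$ itself splits as $\varphi = t + (i+1)\pi$. From \eqref{2.6}, $\varphi'(x) = z - \frac{q(x)}{z}\sin^2\varphi = z\bigl(1 - \frac{q(x)}{z^2}\sin^2 t\bigr)$, so
\begin{equation*}
dx = \frac{dt}{z\bigl(1 - \frac{q(x)}{z^2}\sin^2 t\bigr)}, \qquad x = \varphi^{-1}(t + (i+1)\pi).
\end{equation*}

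Next I would substitute these into the integrand on the left-hand side of \eqref{jh}. The prefactor $\frac{q(x)}{z}\, dx$ becomes
\begin{equation*}
\frac{\frac{q(\varphi^{-1}(t+(i+1)\pi))}{z^2}}{1 - \frac{q(\varphi^{-1}(t+(i+1)\pi))}{z^2}\sin^2 t}\, dt,
\end{equation*}
and the trigonometric factor becomes $\sin^2 t - (t + (i+1)\pi)\sin t\cos t$. Splitting the $(t+(i+1)\pi)$ into the $t$-piece and the constant $(i+1)\pi$-piece and integrating over $[-\pi/2,\, -\pi/2+D]$ yields exactly the two terms on the right-hand side of \eqref{jh}.

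There is no real obstacle here; the lemma is essentially the bookkeeping for \eqref{2.6} written in the new variable $t$. The only points requiring a modicum of care are (i) justifying the invertibility of $\varphi$ on the range $\varphi \in [i\pi+\pi/2,\, i\pi+\pi/2+D]\subset[0,\varphi(x_0)]$ from the hypothesis $z^2 > q(x_0)$ together with \eqref{2.6}, and (ii) correctly isolating the $(i+1)\pi$ term so that it appears as an additive second integral rather than being absorbed into the first. Once these are handled, \eqref{jh} follows.
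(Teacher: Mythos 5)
Your proof is correct and is exactly the argument the paper intends: the paper states Lemma \ref{ty} as an immediate consequence of the substitution $t=\varphi(x)-(i+1)\pi$, and your write-up simply supplies the bookkeeping (periodicity of $\sin^2$ and $\sin\cos$, the Jacobian from \eqref{2.6}, and the split of $\varphi=t+(i+1)\pi$ into the two integrals). No issues.
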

\begin{lemma} \label{wa} Let $q(x)\geq 0$ be monotone increasing
on $[0,x_{0}]$ such that $z^{2}> q(x_{0})$ and $\varphi ^{-1}(i\pi
+\frac{\pi }{2}+D)\in(0,x_{0}]$ $(i\geq0)$ with $\frac{\pi}{2}\leq
D\leq\pi.$ Then
\begin{description}
  \item[i)] \begin{eqnarray}\label{aa}
{\textstyle\frac{\pi}{4}\Big(\frac{\frac{q(\varphi ^{-1}( i\pi
+\frac{\pi }{2}))}{z^{2}}}{1-\frac{3q(\varphi ^{-1}( i\pi +\frac{\pi
}{2}))}{4z^{2}}}\Big)}\leq\int_{\varphi ^{-1}( i\pi +\frac{\pi }{2})
}^{\varphi ^{-1}(i\pi +\frac{\pi
}{2}+D)}\frac{q(x)}{z}\sin^{2}\varphi dx\leq{\textstyle
\frac{\pi}{2}\Big(\frac{\frac{q(\varphi ^{-1}(i\pi +\frac{\pi
}{2}+D))}{z^{2}}}{1-\frac{q(\varphi ^{-1}(i\pi +\frac{\pi
}{2}+D))}{z^{2}}}\Big),}
\end{eqnarray}
  \item[ii)] \begin{eqnarray}\label{bb}
&&\int_{-\frac{\pi }{2}}^{-\frac{\pi
}{2}+D}{\textstyle\frac{\frac{q(\varphi ^{-1}(t+(i+1)\pi))}{z^{2}}
\Big(\sin^{2}t-t\sin t\cos t\Big)}{1-\frac{q(\varphi
^{-1}(t+(i+1)\pi))}{z^{2}}\sin^{2}t}dt}\cr&&{\textstyle
\geq\frac{\pi}{4}\Big(\frac{\frac{q(\varphi ^{-1}( i\pi +\frac{\pi
}{2}))}{z^{2}}}{1-\frac{3q(\varphi ^{-1}( i\pi+\frac{\pi}
{2}))}{4z^{2}}}\Big)+\frac{\pi}{4} \log\Big(1-\frac{q(\varphi ^{-1}(
i\pi+\frac{\pi}{2}))}{z^{2}}\Big)}\cr&&{\textstyle
-\frac{\pi}{6}\log\Big(1-\frac{3q(\varphi ^{-1}( i\pi +\frac{\pi
}{2}))}{4z^{2}}\Big),}
\end{eqnarray}
\item[iii)] there exists $c_{1}\in(i\pi
+\frac{\pi}{2},~i\pi +\frac{\pi }{2}+D)$ $(i\geq0)$ such that
\begin{eqnarray}\label{ri}
&&-(i+1)\pi\int_{-\frac{\pi }{2}}^{-\frac{\pi }{2}+D}
\frac{\frac{q(\varphi ^{-1}(t+(i+1)\pi))}{z^{2}} \sin t\cos
t}{1-\frac{q(\varphi ^{-1}(t+(i+1)\pi))}{z^{2}}\sin^{2}t}dt
\cr&&\geq-\frac{(i+1)\pi^{2}}{2z^{2}}
\Big(\frac{1}{1-\frac{q(\varphi
^{-1}(c_{1}))}{z^{2}}}\Big)\frac{q'(\varphi
^{-1}(c_{1}))}{\varphi'(\varphi ^{-1}(c_{1}))}.
\end{eqnarray}
\end{description}
\end{lemma}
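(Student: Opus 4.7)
All three parts of Lemma \ref{wa} are estimates for the Pr\"ufer-type trigonometric integrals produced by Lemma \ref{ty}, and I would attack them by combining two monotonicities --- pointwise in $a(t):=q(\varphi^{-1}(t+(i+1)\pi))/z^{2}$, which is nondecreasing in $t$ because $q$ and $\varphi^{-1}$ are both increasing on the relevant interval, and in the parameter $D$ --- together with explicit evaluation at the endpoint $D=\pi/2$.

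For parts (i) and (ii) I would first observe that both integrands $\frac{a\sin^{2}t}{1-a\sin^{2}t}$ and $\frac{a(\sin^{2}t-t\sin t\cos t)}{1-a\sin^{2}t}$ are nondecreasing in $a$ (for the second, via the pointwise inequality $\sin^{2}t-t\sin t\cos t\geq 0$ on $[-\pi/2,\pi/2]$, which is a one-line check). Replacing $a(t)$ by the endpoint values $a_{D}:=a(-\pi/2+D)$ and $a_{0}:=a(-\pi/2)$ therefore produces an upper, resp. lower, bound. The upper half of (i) is then immediate from $\sin^{2}t\leq 1$ in the denominator together with $\int\sin^{2}t\,dt\leq \pi/2$. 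For the lower bounds in (i) and (ii), once $a_{0}$ is frozen both integrands are nonnegative, so the integrals are nondecreasing in $D$ and it suffices to treat $D=\pi/2$. There I would expand $(1-a_{0}\sin^{2}t)^{-1}=\sum_{k\geq 0}a_{0}^{k}\sin^{2k}t$ and integrate termwise on $[-\pi/2,0]$ using
\[
\int_{-\pi/2}^{0}\sin^{2k}t\,dt=\tfrac{\pi}{2}c_{k},\qquad \int_{-\pi/2}^{0}t\sin^{2k-1}t\cos t\,dt=\tfrac{\pi(1-c_{k})}{4k},\qquad c_{k}:=\binom{2k}{k}\!\big/4^{k},
\]
the second identity by integration by parts against $v=\sin^{2k}t/(2k)$. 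Expanding the right-hand sides of (i)--(ii) as power series in $a_{0}$ via $\log(1-x)=-\sum x^{k}/k$ and the geometric series, both inequalities collapse into the single termwise comparison $c_{k}\geq 2\cdot 3^{k-1}/4^{k}$ for every $k\geq 1$. This I would establish by induction: equality at $k=1$, and the ratio $c_{k+1}/c_{k}=(2k+1)/(2k+2)$ is $\geq 3/4$ precisely for $k\geq 1$, so the bound propagates.

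For part (iii), set $I:=\int_{-\pi/2}^{-\pi/2+D}\frac{a(t)\sin t\cos t}{1-a(t)\sin^{2}t}\,dt$; after absorbing the sign and the factor $(i+1)\pi$, the target is $I\leq \frac{\pi}{2}\frac{a'(t^{*})}{1-a(t^{*})}$ for some $t^{*}\in(-\pi/2,-\pi/2+D)$. I would extend $a$ to $[-\pi/2,\pi/2]$ by the constant value $a_{D}$ on $[-\pi/2+D,\pi/2]$. Because $\sin t\cos t\geq 0$ on that tail (since $D\geq \pi/2$ gives $-\pi/2+D\geq 0$), the enlarged integral $\tilde I$ satisfies $I\leq \tilde I$. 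The virtue of the extension is that $\sin^{2}(\pm\pi/2)=1$, so an integration by parts turns the boundary contribution cleanly into $\tfrac{1}{2}\log\tfrac{1-a_{0}}{1-a_{D}}=\tfrac{1}{2}\int_{-\pi/2}^{-\pi/2+D}\tfrac{a'(t)}{1-a(t)}\,dt$, and $\tilde I$ collapses into
\[
\tilde I=\frac{1}{2}\int_{-\pi/2}^{-\pi/2+D}\frac{a'(t)\cos^{2}t}{(1-a(t))(1-a(t)\sin^{2}t)}\,dt.
\]
The first mean value theorem for integrals applied with continuous factor $a'/(1-a)$ and nonnegative weight $\cos^{2}t/(1-a\sin^{2}t)$, combined with the elementary bound $\cos^{2}t+a\sin^{2}t=1-(1-a)\sin^{2}t\leq 1$ (valid because $a\leq q(x_{0})/z^{2}<1$), leaves at most $D\leq \pi$ for the residual and yields \eqref{ri} after translating back via $c_{1}=t^{*}+(i+1)\pi$ and the chain rule.

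The main obstacle I anticipate is the bookkeeping in (iii): the extension of $a$, the specific IBP manipulation that hinges on $\sin^{2}(\pm\pi/2)=1$, and the mean value theorem must all cohere so that $t^{*}$ actually lands in the open interval $(-\pi/2,-\pi/2+D)$ and the cosine-squared weight is indeed bounded by $1$. For (i)--(ii) the only non-routine step is the combinatorial inequality $c_{k}\geq 2\cdot 3^{k-1}/4^{k}$; once that is in hand, the series comparison is direct.
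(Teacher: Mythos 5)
Your proposal is correct. For parts (i) and (ii) it is essentially the paper's own argument: expand $(1-a\sin^{2}t)^{-1}$ as a geometric series, freeze $a(t)=q(\varphi^{-1}(t+(i+1)\pi))/z^{2}$ at its endpoint values by monotonicity, reduce the lower bounds to the half-period $[-\pi/2,0]$, and compare Wallis integrals termwise; your inequality $\binom{2k}{k}/4^{k}\geq 2\cdot 3^{k-1}/4^{k}$, proved via the ratio $(2k+1)/(2k+2)\geq 3/4$, is literally the paper's bound $\prod_{p=1}^{n}\frac{2p+1}{2p+2}\geq(3/4)^{n}$ in different clothing, and your integration by parts for the $t\sin^{2k-1}t\cos t$ term matches \eqref{jiji}. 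Part (iii) is where you genuinely diverge: the paper again expands in series, integrates $\sin^{2n+1}t\cos t$ exactly on $[-\pi/2,0]$ and $[0,-\pi/2+D]$ separately to reach $\tfrac12\bigl[\log(1-a_{D})-\log(1-a_{0})\bigr]$, and then applies the Lagrange mean value theorem to $t\mapsto\log\bigl(1-a(t)\bigr)$, whereas you extend $a$ by a constant so that $\sin^{2}(\pm\pi/2)=1$ kills the boundary terms, integrate by parts once to get the closed form $\tfrac12\int a'\cos^{2}t\,\bigl[(1-a)(1-a\sin^{2}t)\bigr]^{-1}dt$, and finish with the first mean value theorem for integrals plus $\cos^{2}t/(1-a\sin^{2}t)\leq 1$. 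Your route avoids the series entirely and, as a bonus, sidesteps a small imprecision in the paper's version, where the intermediate bound momentarily invokes $q\bigl(\varphi^{-1}((i+1)\pi+\tfrac{\pi}{2})\bigr)$ at a point that need not lie in $[0,x_{0}]$ when $D<\pi$; the only care needed on your side is the (standard) refinement of the integral mean value theorem placing $t^{*}$ in the open interval, which holds here since the weight $\cos^{2}t/(1-a\sin^{2}t)$ is positive off the endpoints. Both mechanisms land on the same constant $\pi/2$.
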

\begin{proof}
\begin{description}
  \item[i)] By virtue of Lemma \ref{ty},
  \begin{eqnarray*}
\int_{\varphi ^{-1}( i\pi +\frac{\pi }{2}) }^{\varphi ^{-1}( i\pi
+\frac{\pi }{2}+D)}\frac{q(x)}{z}\sin^{2}\varphi dx=\int_{-\frac{\pi
}{2}}^{-\frac{\pi }{2}+D}\frac{\frac{q(\varphi
^{-1}(t+(i+1)\pi))}{z^{2}} \sin^{2}t}{1-\frac{q(\varphi
^{-1}(t+(i+1)\pi))}{z^{2}}\sin^{2}t} dt.
  \end{eqnarray*}
For $z^{2}>q(x_{0}),$ $\vert \frac{q(\varphi ^{-1}( t) )}{z^{2}}\sin
^{2}(t)\vert <1$, and hence, the function
$\frac{1}{1-\frac{q}{z^{2}}\sin ^{2}\varphi }$ is developable in
entire series. Thus, since $q(x)\geq 0$ is monotone increasing, then
\begin{eqnarray}\label{rit}
&&\int_{-\frac{\pi }{2}}^{-\frac{\pi }{2}+D}\frac{\frac{q(\varphi
^{-1}(t+(i+1)\pi))}{z^{2}} \sin^{2}t}{1-\frac{q(\varphi
^{-1}(t+(i+1)\pi))}{z^{2}}\sin^{2}t} dt\cr&&=\sum_{n\geq 0}
\int_{-\frac{\pi }{2}}^{-\frac{\pi }{2}+D}\Big(\frac{q(\varphi
^{-1}(t+(i+1)\pi))}{z^{2}}\Big)^{n+1}(\sin
t)^{2n+2}dt\cr&&\geq\sum_{n\geq 0}\Big(\frac{q(\varphi ^{-1}( i\pi
+\frac{\pi }{2}))}{z^{2}}\Big)^{n+1} \int_{-\frac{\pi
}{2}}^{-\frac{\pi }{2}+D}(\sin t)^{2n+2}dt.
\end{eqnarray}
By integration by parts, we obtain
\begin{eqnarray*} &&\int_{ -\frac{\pi }{2}}^{-\frac{\pi }{2}+D}(\sin t)^{2n+2}
\geq\int_{ -\frac{\pi }{2}}^{0}(\sin t)^{2n+2}
 dt=\int_{ -\frac{\pi }{2}}^{0}(\sin t)^{2n+1}\sin(t)
dt\cr&&=(2n+1)\int_{ -\frac{\pi }{2}}^{0}(\sin t)^{2n}\cos^{2}(t)
dt\cr&&=(2n+1)\int_{-\frac{\pi }{2}}^{0}(\sin t)^{2n}
dt-(2n+1)\int_{ -\frac{\pi }{2}}^{0}(\sin t)^{2n+2} dt.
\end{eqnarray*}Therefore,
 \begin{eqnarray}\label{sin}
&&\int_{ -\frac{\pi }{2}}^{-\frac{\pi }{2}+D}(\sin t)^{2n+2} dt\geq
\frac{2n+1}{2n+2}\int_{ -\frac{\pi }{2}}^{0}(\sin t)^{2n} dt\cr&&=
\prod_{p=1}^{n}\frac{2p+1}{2p+2}\int_{ -\frac{\pi
}{2}}^{0}\sin^{2}(t)dt=\frac{\pi}{4}\prod_{p=1}^{n}\frac{2p+1}{2p+2}\geq
\frac{\pi}{4}\Big(\frac{3}{4}\Big)^{n}.
\end{eqnarray}
Then
\begin{eqnarray}\label{rt}
&&\int_{\varphi ^{-1}( i\pi +\frac{\pi }{2}) }^{\varphi ^{-1}(i\pi
+\frac{\pi }{2}+D)}\frac{q(x)}{z} \sin^{2}\varphi
dx\cr&&\geq\sum_{n\geq 0}\Big(\frac{q(\varphi ^{-1}( i\pi +\frac{\pi
}{2}))}{z^{2}}\Big)^{n+1}\int_{-\frac{\pi }{2}}^{0}(\sin t)^{2n+2}
dt\cr&&\geq\frac{\pi}{4}\frac{q(\varphi ^{-1}( i\pi +\frac{\pi
}{2}))}{z^{2}}\sum_{n\geq0}\Big(\frac{3q(\varphi ^{-1}( i\pi
+\frac{\pi
}{2}))}{4z^{2}}\Big)^{n}\cr&&=\frac{\pi}{4}\Big(\frac{\frac{q(\varphi
^{-1}( i\pi +\frac{\pi }{2}))}{z^{2}}}{1-\frac{3q(\varphi ^{-1}(
i\pi +\frac{\pi }{2}))}{4z^{2}}}\Big).
\end{eqnarray}
Analogously,
\begin{eqnarray}\label{rp}
&&\int_{-\frac{\pi }{2}}^{-\frac{\pi }{2}+D}\frac{\frac{q(\varphi
^{-1}(t+(i+1)\pi))}{z^{2}} \sin^{2}t} {1-\frac{q(\varphi
^{-1}(t+(i+1)\pi))}{z^{2}}\sin^{2}t} dt\cr&&\leq\sum_{n\geq
0}{\textstyle\Big(\frac{q(\varphi ^{-1}( i\pi +\frac{\pi
}{2}+D))}{z^{2}}\Big)^{n+1}} \int_{-\frac{\pi }{2}}^{\frac{\pi
}{2}}(\sin t)^{2n+2}dt\cr&&\leq\sum_{n\geq
0}{\textstyle\Big(\frac{q(\varphi ^{-1}(i\pi +\frac{\pi
}{2}+D))}{z^{2}}\Big)^{n+1}}\int_{ -\frac{\pi }{2}}^{\frac{\pi
}{2}}\sin^{2}(t) dt\cr&&=\frac{\pi}{2}\Big(\frac{\frac{q(\varphi
^{-1}(i\pi +\frac{\pi }{2}+D))}{z^{2}}}{1-\frac{q(\varphi ^{-1}(
i\pi +\frac{\pi }{2}+D))}{z^{2}}}\Big).
\end{eqnarray} Using \eqref{rt} and \eqref{rp}, we find \eqref{aa}.
\item[ii)] Clearly, if $\vert t \vert \in ] 0,\frac{\pi }{2}[ ,$ then
$\sin ^{2}t -t \sin t \cos t
>0,$  and hence,\begin{eqnarray*}
&&\int_{-\frac{\pi }{2}}^{-\frac{\pi
}{2}+D}{\textstyle\frac{\frac{q(\varphi ^{-1}(t+(i+1)\pi))}{z^{2}}
\Big(\sin^{2}t-t\sin t\cos t\Big)}{1-\frac{q(\varphi
^{-1}(t+(i+1)\pi))}{z^{2}}\sin^{2}(t)}dt}\cr&&\geq\int_{-\frac{\pi
}{2}}^{0}{\textstyle\frac{\frac{q(\varphi ^{-1}(t+(i+1)\pi))}{z^{2}}
\Big(\sin^{2}t-t\sin t\cos t\Big)}{1-\frac{q(\varphi
^{-1}(t+(i+1)\pi))}{z^{2}}\sin^{2}(t)}dt}\cr&&=\sum_{n\geq 0}
\int_{-\frac{\pi }{2}}^{0}{\textstyle\Big(\frac{q(\varphi
^{-1}(t+(i+1)\pi))}{z^{2}}\Big)^{n+1}(\sin
t)^{2n}\Big(\sin^{2}t-t\sin t\cos t\Big)dt}\cr&&\geq\sum_{n\geq
0}{\textstyle\Big(\frac{q(\varphi ^{-1}( i\pi +\frac{\pi
}{2}))}{z^{2}}\Big)^{n+1} \int_{-\frac{\pi }{2}}^{0}(\sin
t)^{2n+2}dt}\cr&&-\sum_{n\geq 0}{\textstyle\Big(\frac{q(\varphi
^{-1}( i\pi +\frac{\pi }{2}))}{z^{2}}\Big)^{n+1} \int_{-\frac{\pi
}{2}}^{0}t(\sin t)^{2n+1}\cos(t) dt}.
\end{eqnarray*}
Integrating by parts and using \eqref{sin}, yields
\begin{eqnarray}\label{jiji}
&&-\sum_{n\geq 0}{\textstyle\Big(\frac{q(\varphi ^{-1}( i\pi
+\frac{\pi }{2}))}{z^{2}}\Big)^{n+1}} \int_{-\frac{\pi
}{2}}^{0}{\textstyle t(\sin t)^{2n+1}\cos t dt}\cr
&&=-\sum_{n\geq0}{\textstyle\Big(\frac{q(\varphi ^{-1}( i\pi
+\frac{\pi }{2}))}{z^{2}}\Big)^{n+1} \Big[\frac{t(\sin
t)^{2n+2}}{2n+2}\Big]_{-\frac{\pi}{2}}^{0}}\cr&&+
\sum_{n\geq0}{\textstyle\Big(\frac{q(\varphi ^{-1}( i\pi +\frac{\pi
}{2}))}{z^{2}}\Big)^{n+1}\frac{1}{2n+2}
\int_{-\frac{\pi}{2}}^{0}(\sin t)^{2n+2}dt}\cr&&\geq-
\frac{\pi}{4}\sum_{n\geq0}{\textstyle\frac{1}{n+1}\Big(\frac{q(\varphi
^{-1}( i\pi +\frac{\pi
}{2}))}{z^{2}}\Big)^{n+1}+\frac{\pi}{8}\sum_{n\geq0}\frac{1}{n+1}\Big(\frac{q(\varphi
^{-1}( i\pi +\frac{\pi
}{2}))}{z^{2}}\Big)^{n+1}\Big(\frac{3}{4}\Big)^{n}}\cr&&={\textstyle\frac{\pi}{4}
\log\Big(1-\frac{q(\varphi ^{-1}( i\pi +\frac{\pi
}{2}))}{z^{2}}\Big)-\frac{\pi}{6}\log\Big(1-\frac{3q(\varphi ^{-1}(
i\pi +\frac{\pi }{2}))}{4z^{2}}\Big)}.
\end{eqnarray}
Therefore, \eqref{rt} and \eqref{jiji} give \eqref{bb}.
\item[iii)] As before,
\begin{eqnarray}\label{jdid}
&&-\int_{-\frac{\pi }{2}}^{-\frac{\pi }{2}+D}{\textstyle
\frac{\frac{q(\varphi ^{-1}(t+(i+1)\pi))}{z^{2}} \sin t\cos
t}{1-\frac{q(\varphi
^{-1}(t+(i+1)\pi))}{z^{2}}\sin^{2}t}dt}\cr&&=-\int_{-\frac{\pi
}{2}}^{-\frac{\pi }{2}+D}\sum_{n\geq 0}{\textstyle
\Big(\frac{q(\varphi
^{-1}(t+(i+1)\pi))}{z^{2}}\Big)^{n+1}\sin^{2n+1}(t)\cos(t)dt}\cr&&\geq-\sum_{n\geq
0}{\textstyle\Big(\frac{q(\varphi^{-1}(i\pi +\frac{\pi
}{2}))}{z^{2}}\Big)^{n+1}}\int_{-\frac{\pi
}{2}}^{0}{\textstyle\sin^{2n+1}(t)\cos(t)dt}\cr&&-\sum_{n\geq
0}{\textstyle\Big(\frac{q(\varphi^{-1}((i+1)\pi +\frac{\pi
}{2}))}{z^{2}}\Big)^{n+1}}\int_{0}^{-\frac{\pi
}{2}+D}{\textstyle\sin^{2n+1}(t)\cos(t)dt}\cr&&=\frac{1}{2}\Big[\sum_{n\geq
0}{\textstyle\frac{1}{n+1}\Big(\frac{q(\varphi^{-1}(i\pi +\frac{\pi
}{2}))}{z^{2}}\Big)^{n+1}}-\sum_{n\geq
0}{\textstyle\frac{1}{n+1}\Big(\frac{q(\varphi^{-1}(i\pi +\frac{\pi
}{2}+D))}{z^{2}}\Big)^{n+1}\Big]}\cr&&= \frac{1}{2}
\Big[\log\Big(1-\frac{q(\varphi ^{-1}(i\pi +\frac{\pi }{2}+D)
)}{z^{2}}\Big)-\log\Big(1-\frac{q(\varphi ^{-1}( i\pi +\frac{\pi
}{2}) )}{z^{2}}\Big)\Big].
\end{eqnarray}
By the mean value theorem, there exists $c_{1}\in(i\pi +\frac{\pi
}{2}, i\pi +\frac{\pi }{2}+D)$ such that
\begin{eqnarray*}\label{21}
&&{\textstyle\Big[\log\Big(1-\frac{q(\varphi ^{-1}(i\pi +\frac{\pi
}{2}+D))}{z^{2}}\Big)-\log\Big(1-\frac{q(\varphi ^{-1}(
i\pi+\frac{\pi }{2} ))}{z^{2}}\Big)\Big]}\cr&&=-\frac{D}{z^{2}}
\Big(\frac{1}{1-\frac{q(\varphi
^{-1}(c_{1}))}{z^{2}}}\Big)\frac{\partial
q(\varphi^{-1}(c_{1}))}{\partial t}\cr&&\geq-\frac{\pi}{z^{2}}
\Big(\frac{1}{1-\frac{q(\varphi
^{-1}(c_{1}))}{z^{2}}}\Big)\frac{\partial
q(\varphi^{-1}(c_{1}))}{\partial t}\cr&&=-\frac{\pi}{z^{2}}
\Big(\frac{1}{1-\frac{q(\varphi ^{-1}(c_{1}))}{z^{2}}}\Big)
\frac{q'(\varphi ^{-1}(c_{1}))}{\varphi'(\varphi ^{-1}(c_{1}))}.
\end{eqnarray*}
Therefore, from this and \eqref{jdid}, we obtain \eqref{ri}.
\end{description}
\end{proof}
\begin{lemma} \label{3.5} Let $q(x)$ be satisfying the conditions in
Lemma \ref{wa}. Then there exists \\$c_{2}\in(i\pi +\frac{\pi
}{2},i\pi +\frac{\pi }{2}+D)$ $(\frac{\pi}{2}\leq D\leq\pi)$ such
that
\begin{eqnarray}\label{moi}
&& -2\int_{\varphi ^{-1}( i\pi +\frac{\pi }{2}) }^{\varphi ^{-1}(
i\pi +\frac{\pi }{2}+D)}\frac{q(x)}{z}\sin\varphi\cos\varphi
\Big(\int_{\varphi^{-1}(i\pi+\frac{\pi}{2})}^{x}\frac{q(s)}{z}
\sin^{2}\varphi ds
\Big)dx\cr&&\geq{\textstyle-\frac{\pi}{4}\Big[\Big(\frac{\frac{q(\varphi
^{-1}(i\pi +\frac{\pi }{2}))}{z^{2}}}{1-\frac{3}{4}\frac{q(\varphi
^{-1}(i\pi +\frac{\pi
}{2}))}{z^{2}}}\Big)\log\Big(1-\frac{3}{4}\frac{q(\varphi ^{-1}(i\pi
+\frac{\pi }{2}))}{z^{2}}\Big)-2\Big(\frac{\frac{q(\varphi
^{-1}(i\pi +\frac{\pi }{2}))}{z^{2}}}{1-\frac{q(\varphi ^{-1}(i\pi
+\frac{\pi }{2}))}{z^{2}}}\Big)\log\Big(1-\frac{q(\varphi ^{-1}(i\pi
+\frac{\pi }{2}))}{z^{2}}\Big)\Big]}\cr&&+\frac{\pi^{2}}{2z^{2}}
\Big(\frac{\log(1-\frac{q(\varphi^{-1}(c_{2}))}{z^{2}})
-\frac{q(\varphi^{-1}(c_{2}))}{z^{2}}}
{(1-\frac{q(\varphi^{-1}(c_{2}))}{z^{2}})^{2}}\Big)\frac{q'(\varphi
^{-1}(c_{2}))}{\varphi'(\varphi ^{-1}(c_{2}))}.
\end{eqnarray}
\end{lemma}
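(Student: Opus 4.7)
The plan is to mirror, step by step, the strategy that worked in the proofs of Lemmas \ref{ty} and \ref{wa}. First I would apply the Pr\"{u}fer substitution $t=\varphi(x)-(i+1)\pi$ used in Lemma \ref{ty} to both the outer and the inner integrals: since $\sin(t+(i+1)\pi)=(-1)^{i+1}\sin t$, the products $\sin^{2}\varphi$ and $\sin\varphi\cos\varphi$ become $\sin^{2}t$ and $\sin t\cos t$, and $dx=dt/(z(1-Q(t)\sin^{2}t))$ with $Q(t):=q(\varphi^{-1}(t+(i+1)\pi))/z^{2}\in[0,1)$. Writing $Q_{0}:=Q(-\pi/2)$ and $Q_{D}:=Q(-\pi/2+D)$, the left-hand side of \eqref{moi} becomes
\[
-2\int_{-\pi/2}^{-\pi/2+D}\frac{Q(t)\sin t\cos t}{1-Q(t)\sin^{2}t}\Big(\int_{-\pi/2}^{t}\frac{Q(s)\sin^{2}s}{1-Q(s)\sin^{2}s}\,ds\Big)dt.
\]

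Next I would split the outer interval at $t=0$, which is interior to $[-\pi/2,-\pi/2+D]$ because $D\geq\pi/2$. On $[-\pi/2,0]$ one has $\sin t\cos t\leq 0$, so the integrand (after the leading minus) is nonnegative; the natural lower bound is obtained by replacing the inner integral $I(t)$ by the Wallis-type lower bound of Lemma \ref{wa}(i), which is a constant multiple of $Q_{0}/(1-\tfrac{3}{4}Q_{0})$. On $[0,-\pi/2+D]$ the integrand is nonpositive, so for a lower bound of the whole expression I would replace $I(t)$ by the upper bound of Lemma \ref{wa}(i), which is a constant multiple of $Q_{D}/(1-Q_{D})$. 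After these substitutions I am left, on each half, with outer integrals of the form $\int\frac{Q(t)\sin t\cos t}{1-Q(t)\sin^{2}t}\,dt$ multiplied by constants.

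To evaluate these outer integrals I expand the denominator $(1-Q(t)\sin^{2}t)^{-1}=\sum_{n\geq 0}Q(t)^{n+1}\sin^{2n}t$ in a geometric series (valid since $Q(t)<1$) and bound $Q(t)^{n+1}$ via the monotonicity of $q\circ\varphi^{-1}$, exactly as in the proofs of Lemma \ref{wa}(ii)--(iii). The outer primitives $\int\sin^{2n+1}t\cos t\,dt=\sin^{2n+2}t/(2n+2)$ are elementary, and summing the resulting series produces terms of the shape $\frac{Q}{1-cQ}\log(1-cQ)$ with $c=\tfrac{3}{4}$ from the $[-\pi/2,0]$ piece and $c=1$ (with the extra factor $2$) from the $[0,-\pi/2+D]$ piece, reproducing exactly the first bracket on the right-hand side of \eqref{moi}. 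What remains is a residual term equal to the difference $F(Q_{D})-F(Q_{0})$ of an antiderivative $F$ with $F'(u)=(\log(1-u)-u)/(1-u)^{2}$; one checks directly that $F(u)=u\log(1-u)/(1-u)$ does the job. Applying the mean value theorem to the composition $t\mapsto F(Q(t))$ on $[i\pi+\pi/2,\,i\pi+\pi/2+D]$ and bounding the factor $D$ by $\pi$ then yields the second term in \eqref{moi} at some $c_{2}$ in the required interval.

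The main obstacle is the bookkeeping: on each half of the split one must track carefully whether the monotonicity of $Q(t)$ gives an upper or a lower bound, so that the combined estimate points in the correct direction of inequality. A related delicate point is to identify, among the many possible antiderivatives produced in the series manipulations, the specific $F(u)=u\log(1-u)/(1-u)$ whose derivative produces the precise kernel $(\log(1-u)-u)/(1-u)^{2}$ appearing in the statement; without this observation the MVT step would produce a weaker, more complicated bound.
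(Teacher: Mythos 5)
Your decomposition at $t=0$ with a pointwise replacement of the inner integral by constants breaks down on the first half. Write $Q(t)=q(\varphi^{-1}(t+(i+1)\pi))/z^{2}$, $Q_{0}=Q(-\frac{\pi}{2})$ and $I(t)=\int_{-\pi/2}^{t}\frac{Q(s)\sin^{2}s}{1-Q(s)\sin^{2}s}\,ds$. On $[-\frac{\pi}{2},0]$ the outer factor $-2Q(t)\sin t\cos t/(1-Q(t)\sin^{2}t)$ is nonnegative, so a lower bound for that piece requires a pointwise \emph{lower} bound for $I(t)$; but the Wallis-type bound $\frac{\pi}{4}\,Q_{0}/(1-\frac{3}{4}Q_{0})$ of Lemma \ref{wa}(i) is a bound for the \emph{full} integral $I(-\frac{\pi}{2}+D)$ with $D\ge\frac{\pi}{2}$ (its proof uses $\int_{-\pi/2}^{-\pi/2+D}\sin^{2n+2}\ge\int_{-\pi/2}^{0}\sin^{2n+2}$), and it is false for $I(t)$ with $t$ near $-\frac{\pi}{2}$, where $I(t)\to 0$. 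So that substitution is not a valid minorization, and the only legitimate constant bound on that half, $I(t)\ge 0$, loses the positive term $-\frac{\pi}{4}\frac{Q_{0}}{1-\frac{3}{4}Q_{0}}\log(1-\frac{3}{4}Q_{0})$ of \eqref{moi}, which is needed later to get $G(s)\ge s/10$. Relatedly, your claim that the elementary primitives $\sin^{2n+2}t/(2n+2)$ ``reproduce exactly'' the first bracket cannot be right: summing $\sum_{n}Q_{0}^{n+1}/(n+1)$ produces $-\log(1-Q_{0})$, never $\log(1-\frac{3}{4}Q_{0})$; the $\frac{3}{4}$ inside that logarithm can only come from a Wallis estimate applied to powers $\sin^{2n+4}$, which your outer integrals never generate.

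The paper's proof avoids both problems by integrating by parts in the outer integral \emph{before} any splitting: expanding $\frac{q}{z}\sin\varphi\cos\varphi\,dx=\sum_{n\ge0}(\frac{q}{z^{2}})^{n+1}\varphi'\sin^{2n+1}\varphi\cos\varphi\,dx$ and integrating the exact factor $\varphi'\sin^{2n+1}\varphi\cos\varphi$ yields (a) a boundary term containing the \emph{full} inner integral, to which the two-sided estimate \eqref{aa} legitimately applies; (b) an integral of $(\frac{q}{z^{2}})^{n+1}\frac{q}{z}\sin^{2n+4}\varphi$ coming from differentiating the inner integral, whose Wallis lower bound $\frac{\pi}{4}(\frac{3}{4})^{n+1}Q_{0}/(1-\frac{3}{4}Q_{0})$ is precisely what produces $\log(1-\frac{3}{4}Q_{0})$; and (c) a term proportional to $q'\ge 0$ that is simply discarded by sign. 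Your identification of the antiderivative $F(u)=u\log(1-u)/(1-u)$ with $F'(u)=(\log(1-u)-u)/(1-u)^{2}$, and the final mean-value-theorem step with $D\le\pi$, are correct and coincide with the paper's; but the core of the estimate requires the integration by parts, not the domain split with constant replacements.
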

\begin{proof}
For $z^{2}>q(x_{0}),$ we have
\begin{eqnarray*}
&& -2\int_{\varphi ^{-1}( i\pi +\frac{\pi }{2}) }^{\varphi ^{-1}(
i\pi+\frac{\pi}{2}+D)}{\textstyle\frac{q(x)}{z}\sin\varphi\cos\varphi}
\Big(\int_{\varphi^{-1}(i\pi+\frac{\pi}{2})}^{x}{\textstyle\frac{q(s)}{z}
\sin^{2}\varphi ds \Big)dx}\cr&&=-2\int_{\varphi ^{-1}( i\pi
+\frac{\pi }{2}) }^{\varphi ^{-1}(i\pi +\frac{\pi
}{2}+D)}\frac{\frac{q(x)}{z}\varphi'\sin\varphi\cos\varphi}
{z-\frac{q(x)}{z}\sin^{2}\varphi}
\Big(\int_{\varphi^{-1}(i\pi+\frac{\pi}{2})}^{x}\frac{q(s)}{z}
\sin^{2}\varphi ds \Big)dx\cr&&=-2\sum_{n\geq 0}\int_{\varphi ^{-1}(
i\pi +\frac{\pi }{2}) }^{\varphi
^{-1}(i\pi+\frac{\pi}{2}+D)}{\textstyle\Big(\frac{q(x)}{z^{2}}\Big)^{n+1}\varphi'\sin^{2n+1}(\varphi)
\cos(\varphi)}
\Big(\int_{\varphi^{-1}(i\pi+\frac{\pi}{2})}^{x}{\textstyle\frac{q(s)}{z}
\sin^{2}\varphi ds \Big)dx}.
\end{eqnarray*}
Integrating by parts and taking into account that $q(x)$ be monotone
increasing on $[0,x_0]$, we find
\begin{eqnarray*}
&&-2\sum_{n\geq 0}\int_{\varphi ^{-1}( i\pi +\frac{\pi }{2})
}^{\varphi
^{-1}(i\pi+\frac{\pi}{2}+D)}{\textstyle\Big(\frac{q(x)}{z^{2}}\Big)^{n+1}
\varphi'\sin^{2n+1}(\varphi)\cos(\varphi)}
\Big(\int_{\varphi^{-1}(i\pi+\frac{\pi}{2})}^{x}{\textstyle\frac{q(s)}{z}
\sin^{2}\varphi ds \Big)dx}\cr&&=-2\sum_{n\geq
0}\Big[\frac{\sin^{2n+2}(\varphi)}{2n+2}\Big(\frac{q(x)}{z^{2}}\Big)^{n+1}
\int_{\varphi^{-1}(i\pi+\frac{\pi}{2})}^{x}\frac{q(s)}{z}
\sin^{2}\varphi ds\Big]_{\varphi ^{-1}( i\pi +\frac{\pi }{2})
}^{\varphi ^{-1}(i\pi +\frac{\pi }{2}+D)}\cr&&+\sum_{n\geq
0}\frac{1}{n+1}\int_{\varphi ^{-1}( i\pi +\frac{\pi }{2}) }^{\varphi
^{-1}(i\pi+\frac{\pi}{2}+D)}{\textstyle\sin^{2n+4}
(\varphi)\Big(\frac{q(x)}{z^{2}}\Big)^{n+1}\frac{q(x)}{z}dx}
\cr&&+\sum_{n\geq 0}\int_{\varphi ^{-1}( i\pi +\frac{\pi }{2})
}^{\varphi ^{-1}(i\pi +\frac{\pi
}{2}+D)}{\textstyle\Big(\frac{q(x)}{z^{2}}\Big)^{n}
\frac{q'(x)}{z^{2}}\sin^{2n+2}(\varphi)
}\Big(\int_{\varphi^{-1}(i\pi+\frac{\pi}{2})}^{x}{\textstyle\frac{q(s)}{z}
\sin^{2}\varphi ds \Big)dx}\cr&&\geq -\sum_{n\geq
0}{\textstyle\Big(\frac{q(\varphi ^{-1}(i\pi +\frac{\pi
}{2}+D))}{z^{2}}\Big)^{n+1}\frac{1}{n+1}}
\Big(\int_{\varphi^{-1}(i\pi+\frac{\pi}{2})}^{\varphi ^{-1}(i\pi
+\frac{\pi }{2}+D)}{\textstyle\frac{q(s)}{z} \sin^{2}(\varphi)
ds\Big)}\cr&&+\sum_{n\geq 0}\frac{1}{n+1}\int_{\varphi ^{-1}( i\pi
+\frac{\pi }{2}) }^{\varphi ^{-1}(i\pi +\frac{\pi
}{2}+D)}{\textstyle\Big(\frac{q(x)}{z^{2}}\Big)^{n+1}
\frac{q(x)}{z}\sin^{2n+4}(\varphi)dx} \cr&&\geq-\Big[\sum_{n\geq
0}{\textstyle\frac{1}{n+1}\Big(\frac{q(\varphi ^{-1}(i\pi +\frac{\pi
}{2}+D))}{z^{2}}\Big)^{n+1}\Big]}\int_{\varphi ^{-1}( i\pi
+\frac{\pi }{2}) }^{\varphi ^{-1}(i\pi +\frac{\pi
}{2}+D)}{\textstyle\frac{q(x)}{z}\sin^{2}\varphi
dx}\cr&&+\sum_{n\geq 0}{\textstyle\frac{1}{n+1} \Big(\frac{q(\varphi
^{-1}(i\pi +\frac{\pi }{2}))}{z^{2}}\Big)^{n+1}}\int_{\varphi ^{-1}(
i\pi +\frac{\pi }{2}) }^{\varphi ^{-1}(i\pi +\frac{\pi
}{2}+D)}{\textstyle\frac{q(x)}{z}\sin^{2n+4}\varphi dx}.
\end{eqnarray*}
By \eqref{aa},
\begin{eqnarray}\label{z}&&-\Big[\sum_{n\geq
0}{\textstyle\frac{1}{n+1}\Big(\frac{q(\varphi ^{-1}(i\pi +\frac{\pi
}{2}+D))}{z^{2}}\Big)^{n+1}}\Big]\int_{\varphi ^{-1}( i\pi
+\frac{\pi }{2}) }^{\varphi ^{-1}(i\pi +\frac{\pi
}{2}+D)}{\textstyle\frac{q(x)}{z}\sin^{2}\varphi
dx}\cr&&\geq{\textstyle\frac{\pi}{2}\Big(\frac{\frac{q(\varphi
^{-1}(i\pi +\frac{\pi }{2}+D))}{z^{2}}}{1-\frac{q(\varphi ^{-1}(i\pi
+\frac{\pi }{2}+D))}{z^{2}}}\Big)\log\Big(1-\frac{q(\varphi
^{-1}(i\pi +\frac{\pi }{2}+D))}{z^{2}}\Big)}.
\end{eqnarray}
On the other hand, using \eqref{sin},
\begin{eqnarray*}
&&\int_{\varphi ^{-1}( i\pi +\frac{\pi }{2}) }^{\varphi ^{-1}(i\pi
+\frac{\pi }{2}+D)}\frac{q(x)}{z}\sin^{2n+4}\varphi
dx\cr&&=\sum_{p\geq 0}\int_{ i\pi +\frac{\pi }{2}}^{i\pi +\frac{\pi
}{2}+D}{\textstyle\Big(\frac{q(\varphi
^{-1}(t))}{z^{2}}\Big)^{p+1}\sin^{2(n+p)+4}(t)dt}\cr&&\geq\sum_{p\geq
0}{\textstyle\Big(\frac{q(\varphi ^{-1}( i\pi +\frac{\pi
}{2}))}{z^{2}}\Big)^{p+1}}\int_{ i\pi +\frac{\pi }{2}}^{i\pi
+\frac{\pi}{2}+D}{\textstyle\sin^{2(n+p+1)+2}(t)dt}
\cr&&\geq\frac{\pi}{4}\Big(\frac{3}{4}\Big)^{n+1}
\frac{q(\varphi^{-1}( i\pi +\frac{\pi }{2}))}{z^{2}}\sum_{p\geq
0}{\textstyle\Big(\frac{3q(\varphi ^{-1}( i\pi +\frac{\pi
}{2}))}{4z^{2}}\Big)^{p}}\cr&&
\geq\frac{\pi}{4}\Big(\frac{3}{4}\Big)^{n+1}{\textstyle\Big(\frac{\frac{q(\varphi
^{-1}(i\pi +\frac{\pi }{2}))}{z^{2}}}{1-\frac{3}{4}\frac{q(\varphi
^{-1}(i\pi +\frac{\pi }{2}))}{z^{2}}}\Big)}.
\end{eqnarray*}
Thus
\begin{eqnarray}\label{tn}
&&\sum_{n\geq 0}{\textstyle\frac{1}{n+1} \Big(\frac{q(\varphi
^{-1}(i\pi +\frac{\pi }{2}))}{z^{2}}\Big)^{n+1}}\int_{\varphi ^{-1}(
i\pi +\frac{\pi }{2}) }^{\varphi ^{-1}(i\pi +\frac{\pi
}{2}+D)}{\textstyle\frac{q(x)}{z}\sin^{2n+4}\varphi
dx}\cr&&\geq{\textstyle-\frac{\pi}{4}\Big(\frac{\frac{q(\varphi
^{-1}(i\pi +\frac{\pi }{2}))}{z^{2}}}{1-\frac{3}{4}\frac{q(\varphi
^{-1}(i\pi +\frac{\pi
}{2}))}{z^{2}}}\Big)\log\Big(1-\frac{3}{4}\frac{q(\varphi ^{-1}(i\pi
+\frac{\pi }{2}))}{z^{2}}\Big)}.
\end{eqnarray}
 Therefore, by \eqref{z} and \eqref{tn},
 \begin{eqnarray}\label{jrim}
&&-2\int_{\varphi ^{-1}( i\pi +\frac{\pi }{2}) }^{\varphi ^{-1}(
i\pi+\frac{\pi}{2}+D)}{\textstyle\frac{q(x)}{z}\sin\varphi\cos\varphi}
\Big(\int_{\varphi^{-1}(i\pi+\frac{\pi}{2})}^{x}{\textstyle\frac{q(s)}{z}
\sin^{2}\varphi ds
\Big)dx}\cr&&{\textstyle\geq\frac{\pi}{4}\Big[2\Big(\frac{\frac{q(\varphi
^{-1}(i\pi +\frac{\pi }{2}+D))}{z^{2}}}{1-\frac{q(\varphi ^{-1}(i\pi
+\frac{\pi }{2}+D))}{z^{2}}}\Big)\log\Big(1-\frac{q(\varphi
^{-1}(i\pi +\frac{\pi
}{2}+D))}{z^{2}}\Big)-\Big(\frac{\frac{q(\varphi ^{-1}(i\pi
+\frac{\pi }{2}))}{z^{2}}}{1-\frac{3}{4}\frac{q(\varphi ^{-1}(i\pi
+\frac{\pi }{2}))}{z^{2}}}\Big)\log\Big(1-\frac{3}{4}\frac{q(\varphi
^{-1}(i\pi +\frac{\pi
}{2}))}{z^{2}}\Big)\Big]}\cr&&{\textstyle=\frac{\pi}{2}\Big[\Big(\frac{\frac{q(\varphi
^{-1}(i\pi +\frac{\pi }{2}+D))}{z^{2}}}{1-\frac{q(\varphi ^{-1}(i\pi
+\frac{\pi }{2}+D))}{z^{2}}}\Big)\log\Big(1-\frac{q(\varphi
^{-1}(i\pi +\frac{\pi
}{2}+D))}{z^{2}}\Big)-\Big(\frac{\frac{q(\varphi ^{-1}(i\pi
+\frac{\pi }{2}))}{z^{2}}}{1-\frac{q(\varphi ^{-1}(i\pi +\frac{\pi
}{2}))}{z^{2}}}\Big)\log\Big(1-\frac{q(\varphi ^{-1}(i\pi +\frac{\pi
}{2}))}{z^{2}}\Big)\Big]}\cr&&{\textstyle-\frac{\pi}{4}\Big(\frac{\frac{q(\varphi
^{-1}(i\pi +\frac{\pi }{2}))}{z^{2}}}{1-\frac{3}{4}\frac{q(\varphi
^{-1}(i\pi +\frac{\pi
}{2}))}{z^{2}}}\Big)\log\Big(1-\frac{3}{4}\frac{q(\varphi ^{-1}(i\pi
+\frac{\pi
}{2}))}{z^{2}}\Big)}\cr&&{\textstyle-\frac{\pi}{2}\Big(\frac{\frac{q(\varphi
^{-1}(i\pi +\frac{\pi }{2}))}{z^{2}}}{1-\frac{q(\varphi ^{-1}(i\pi
+\frac{\pi }{2}))}{z^{2}}}\Big)\log\Big(1-\frac{q(\varphi ^{-1}(i\pi
+\frac{\pi }{2}))}{z^{2}}\Big)}.
\end{eqnarray}
 By the mean value theorem, there exists $c_{2}\in(i\pi +\frac{\pi
}{2},i\pi +\frac{\pi }{2}+D)$ such that
\begin{eqnarray}\label{jd}
&&{\textstyle\Big(\frac{\frac{q(\varphi ^{-1}(i\pi +\frac{\pi
}{2}+D))}{z^{2}}}{1-\frac{q(\varphi ^{-1}(i\pi +\frac{\pi
}{2}+D))}{z^{2}}}\Big)\log\Big(1-\frac{q(\varphi ^{-1}(i\pi
+\frac{\pi }{2}+D))}{z^{2}}\Big)-\Big(\frac{\frac{q(\varphi
^{-1}(i\pi +\frac{\pi }{2}))}{z^{2}}}{1-\frac{q(\varphi ^{-1}(i\pi
+\frac{\pi }{2}))}{z^{2}}}\Big)\log\Big(1-\frac{q(\varphi ^{-1}(i\pi
+\frac{\pi }{2}))}{z^{2}}\Big)}\cr&&=\frac{D}{z^{2}}
\Big(\frac{\log(1-\frac{q(\varphi^{-1}(c_{2}))}{z^{2}})-
\frac{q(\varphi^{-1}(c_{2}))}{z^{2}}}
{(1-\frac{q(\varphi^{-1}(c_{2}))}{z^{2}})^{2}}\Big)\frac{\partial
q(\varphi^{-1}(c_{2}))}{\partial t}\cr&&\geq\frac{\pi}{z^{2}}
\Big(\frac{\log(1-\frac{q(\varphi^{-1}(c_{2}))}{z^{2}})-
\frac{q(\varphi^{-1}(c_{2}))}{z^{2}}}
{(1-\frac{q(\varphi^{-1}(c_{2}))}{z^{2}})^{2}}\Big)\frac{\partial
q(\varphi^{-1}(c_{2}))}{\partial t}\cr&&=\frac{\pi}{z^{2}}
\Big(\frac{\log(1-\frac{q(\varphi^{-1}(c_{2}))}{z^{2}})-
\frac{q(\varphi^{-1}(c_{2}))}{z^{2}}}
{(1-\frac{q(\varphi^{-1}(c_{2}))}{z^{2}})^{2}}\Big) \frac{q'(\varphi
^{-1}(c_{2}))}{\varphi'(\varphi ^{-1}(c_{2}))}.
\end{eqnarray}
From this and  \eqref{jrim}, we get \eqref{moi}.
\end{proof}
We are now ready to prove Theorem \ref{the1}.
\begin{proof} If for some $\lambda>0$, $y'(x,z)$ has no zeros
in $(0,1)$, then $\varphi(x,z)<\frac{\pi}{2}$ for $x\in(0,x_0)$, and
hence
\begin{eqnarray}\label{xc}
\int_{0}^{x_0}r^{2}(x) \frac{q(x)}{z}\Big( \sin^{2}\varphi
-\varphi\sin\varphi\cos\varphi\Big) dx\geq0.
\end{eqnarray}
In the rest of the proof let $\varphi ( x_{0},z) =k\pi +\frac{\pi
}{2}+D,$ with $k\geq0$ be an integer and $0\leq D\leq \pi .$ Then
\begin{eqnarray}\label{x_0}
&&\int_{0}^{x_{0}}r^{2}(x)\frac{q(x)}{z}\Big( \sin^{2}\varphi
-\varphi\sin\varphi\cos\varphi\Big) dx \cr &=&\int_{0}^{\varphi
^{-1}( k\pi +\frac{\pi }{2}+D)}r^{2}(x) \frac{q(x)}{z}\Big(
\sin^{2}\varphi -\varphi\sin\varphi\cos\varphi\Big) dx
\cr&=&\int_{0}^{\varphi ^{-1}( \frac{\pi }{2}) }\tau(x)
dx+\sum_{i=0}^{k-1}\int_{\varphi ^{-1}( i\pi +\frac{\pi }{2})
}^{\varphi ^{-1}((i+1) \pi +\frac{\pi }{2}) }\tau(x)
dx+\int_{\varphi ^{-1}( k\pi +\frac{\pi }{2}) }^{\varphi ^{-1}( k\pi
+\frac{\pi }{2}+D) }\tau(x) dx,
\end{eqnarray}
where $\tau(x)= r^{2}(x)\frac{q(x)}{z}\Big( \sin^{2}\varphi
-\varphi\sin\varphi\cos\varphi\Big).$ Since $\varphi\in]0,\frac
{\pi}{2}],$ then
\begin{eqnarray}\label{rr}
\int_{0}^{\varphi ^{-1}(\frac{\pi }{2})}r^{2}(x) \frac{q(x)}{z}\Big(
\sin^{2}\varphi -\varphi\sin\varphi\cos\varphi\Big) dx\geq0.
\end{eqnarray}
It is easily seen that if $0\leq D\leq \frac{\pi }{2}$, then
\begin{eqnarray}\label{23}
\int_{\varphi ^{-1}(i\pi +\frac{\pi }{2})}^{\varphi ^{-1}(i\pi
+\frac{\pi }{2}+D)}r^{2}(x)\frac{q(x)}{z}\Big(\sin^{2}\varphi
-\varphi\sin \varphi \cos \varphi \Big)dx \geq 0.
\end{eqnarray}%
If $\frac{\pi }{2}\leq D\leq \pi $, then in view of Lemma \ref{ty},
together with \eqref{bb} and \eqref{ri},
\begin{eqnarray}\label{aj}
&&\int_{\varphi ^{-1}(i\pi +\frac{\pi }{2}) }^{\varphi ^{-1}(i\pi
+\frac{\pi
}{2}+D)}{\textstyle\frac{q(x)}{z}\Big(\sin^{2}\varphi-\varphi\sin
\varphi \cos \varphi
\Big)dx}\cr&&\geq{\textstyle\frac{\pi}{4}\frac{\frac{q(\varphi
^{-1}( i\pi +\frac{\pi }{2}))}{z^{2}}}{1-\frac{3q(\varphi ^{-1}(
i\pi+\frac{\pi} {2}))}{4z^{2}}}+\frac{\pi}{4}
\log\Big(1-\frac{q(\varphi ^{-1}( i\pi+\frac{\pi}{2}))}{z^{2}}\Big)
-\frac{\pi}{6}\log\Big(1-\frac{3q(\varphi ^{-1}( i\pi +\frac{\pi
}{2}))}{4z^{2}}\Big)}\cr&&-\frac{(i+1)\pi^{2}}{2z^{2}}
\Big(\frac{1}{1-\frac{q(\varphi
^{-1}(c_{1}))}{z^{2}}}\Big)\frac{q'(\varphi
^{-1}(c_{1}))}{\varphi'(\varphi ^{-1}(c_{1}))}.
\end{eqnarray}
Under the hypotheses $z^2\geq 11q(x_{0})$, $\varphi'(\varphi
^{-1}(c_{1}))\geq\frac{10z}{11}$ and $\frac{1}{1-\frac{q(\varphi
^{-1}(c_{1}))}{z^{2}}}\leq\frac{11}{10}$. On the other hand, recall
that if $x_0=\varphi^{-1}(i\pi+\frac{\pi}{2}+D)$ $(\frac{\pi}{2}\leq
D\leq\pi),$ then the solution $y(x,z)$ of \eqref{1.1}-\eqref{1.2}
has exactly $(i+1)$ zeros in $(0,x_0].$ In view of Sturm oscillation
theorem, we have necessarily $z\geq z_{i+1},$ where
$\lambda_{i}=z_{i}^{2}$ are the eigenvalues of Problem
\eqref{1.1}-\eqref{1.2}. Since $z_{i+1}\geq (i+1)\pi$, then
\begin{eqnarray}\label{osc}\frac{(i+1)\pi}{z}\leq1,~~i\geq0.\end{eqnarray}
Therefore, by \eqref{aj} and \eqref{osc},
\begin{eqnarray} \label{jjj}
&&\int_{\varphi ^{-1}( i\pi +\frac{\pi }{2}) }^{\varphi ^{-1}( i\pi
+\frac{3\pi
}{2})}{\textstyle\frac{q(x)}{z}\Big(\sin^{2}\varphi-\varphi\sin
\varphi \cos \varphi
\Big)dx}\cr&&\geq{\textstyle\frac{\pi}{4}\frac{\frac{q(\varphi
^{-1}( i\pi +\frac{\pi }{2}))}{z^{2}}}{1-\frac{3q(\varphi ^{-1}(
i\pi+\frac{\pi} {2}))}{4z^{2}}}+\frac{\pi}{4}
\log\Big(1-\frac{q(\varphi ^{-1}( i\pi+\frac{\pi}{2}))}{z^{2}}\Big)
-\frac{\pi}{6}\log\Big(1-\frac{3q(\varphi ^{-1}( i\pi +\frac{\pi
}{2}))}{4z^{2}}\Big)}\cr&&-\frac{121\pi}{200}\frac{q'(\varphi
^{-1}(c_{1}))}{z^{2}}.
\end{eqnarray} On the other hand, it can be easily verified that
$\frac{\log(1-s)-s}{(1-s)^{2}}\geq \frac{-1}{4}$ for
$s\in[0,\frac{1}{11}]$. Using this, together with \eqref{moi} and
\eqref{osc} (for $i=0$), we obtain

\begin{eqnarray}\label{moi2}
&& -2\int_{\varphi ^{-1}( i\pi +\frac{\pi }{2}) }^{\varphi ^{-1}(
i\pi +\frac{3\pi }{2})}\frac{q(x)}{z}\sin\varphi\cos\varphi
\Big(\int_{\varphi^{-1}(i\pi+\frac{\pi}{2})}^{x}\frac{q(s)}{z}
\sin^{2}\varphi ds
\Big)dx\cr&&\geq{\textstyle-\frac{\pi}{4}\Big[\Big(\frac{\frac{q(\varphi
^{-1}(i\pi +\frac{\pi }{2}))}{z^{2}}}{1-\frac{3}{4}\frac{q(\varphi
^{-1}(i\pi +\frac{\pi
}{2}))}{z^{2}}}\Big)\log\Big(1-\frac{3}{4}\frac{q(\varphi ^{-1}(i\pi
+\frac{\pi }{2}))}{z^{2}}\Big)-2\Big(\frac{\frac{q(\varphi
^{-1}(i\pi +\frac{\pi }{2}))}{z^{2}}}{1-\frac{q(\varphi ^{-1}(i\pi
+\frac{\pi }{2}))}{z^{2}}}\Big)\log\Big(1-\frac{q(\varphi ^{-1}(i\pi
+\frac{\pi }{2}))}{z^{2}}\Big)\Big]}\cr&&-\frac{11\pi}{80}
\frac{q'(\varphi ^{-1}(c_{2}))}{z^{2}}.
\end{eqnarray}
Thus, Summing \eqref{jjj} and \eqref{moi2}, one gets
\begin{eqnarray}\label{fina}
&&\int_{\varphi ^{-1}( i\pi +\frac{\pi }{2}) }^{\varphi ^{-1}( i\pi
+\frac{3\pi }{2})}\frac{q(x)}{z} \Big(\sin^{2}\varphi-\varphi\sin
\varphi \cos \varphi \Big)dx \cr&&-2\int_{\varphi ^{-1}( i\pi
+\frac{\pi }{2}) }^{\varphi ^{-1}( i\pi +\frac{3\pi
}{2})}\frac{q(x)}{z}\sin \varphi \cos \varphi
\Big(\int_{\varphi^{-1}(i\pi+\frac{\pi}{2})}^{x}\frac{q(s)}{z}
\sin^{2}\varphi ds\Big)dx\cr&&\geq \pi G(\frac{q(\varphi ^{-1}(i\pi
+\frac{\pi}{2}))}{z^{2}})-\frac{297\pi}{400z^{2}}q'(\varphi^{-1}(c))
\cr&&\geq \pi G(\frac{q(\varphi ^{-1}(i\pi
+\frac{\pi}{2}))}{z^{2}})-\frac{3\pi}{4z^{2}}q'(\varphi^{-1}(c)),
\end{eqnarray}
where ${\textstyle G(s)=\frac{s}{4(1-\frac{3}{4}s)}\Big(1-\log
(1-\frac{3}{4}s)\Big)+\frac{1}{4}\frac{(1+s)\log
(1-s)}{4(1-s)}-\frac{1}{6}\log (1-\frac{3}{4}s)}$ and\\
$q'(\varphi^{-1}(c))=\max\{q'(\varphi^{-1}(c_{1})),q'(\varphi^{-1}(c_{2}))\}.$
It can be shown by straightforward computation that
$G(s)\geq\frac{s}{10}$ for $s\in[0,\frac{1}{11}].$ By setting
$s=\frac{q(\varphi ^{-1}(i\pi +\frac{\pi}{2}))}{z^{2}}\geq
\frac{q(0)}{z^2}$ and taking into account the condition
$q'(x)\leq\frac{2}{15}q(0)$, we find out that
\begin{eqnarray}\label{JIRIM}
&&\int_{\varphi ^{-1}( i\pi +\frac{\pi }{2}) }^{\varphi ^{-1}( i\pi
+\frac{\pi }{2}+D)}\frac{q(x)}{z} \Big(\sin^{2}\varphi-\varphi\sin
\varphi \cos \varphi \Big)dx \cr&&-2\int_{\varphi ^{-1}( i\pi
+\frac{\pi }{2}) }^{\varphi ^{-1}( i\pi +\frac{\pi
}{2})}\frac{q(x)}{z}\sin \varphi \cos \varphi
\Big(\int_{\varphi^{-1}(i\pi+\frac{\pi}{2})}^{x}\frac{q(s)}{z}
\sin^{2}\varphi ds\Big)dx\cr&&\geq 0.
\end{eqnarray}
Now, according to Lemma \ref{Lem4} together with \eqref{JIRIM}, we
conclude that
\begin{eqnarray}\label{cc}
\int_{\varphi ^{-1}(k\pi +\frac{\pi }{2})}^{\varphi ^{-1}( k\pi
+\frac{\pi }{2}+D)}r^{2}(x)\frac{q(x)}{z}\Big(\sin^{2}\varphi
-\varphi\sin \varphi \cos \varphi \Big)dx \geq 0.
\end{eqnarray}
Therefore, by \eqref{x_0}, \eqref{rr}, \eqref{23} and \eqref{cc} we
have $\dot{\theta}( x_{0},z) \geq 0 $ for $z^{2}\geq 11q(x_{0})$ and
$q'(x)\leq \frac{2}{15}q(0).$ Obviously, if there is a
$z\geq\sqrt{11q(x_{0})} $ with $\dot{\theta}(x_{0},z) =0,$ then in
view of \eqref{x_0},
\begin{eqnarray*}
\int_{0}^{\varphi ^{-1}(\frac{\pi }{2})}r^{2}(x) \frac{q(x)}{z}\Big(
\sin^{2}\varphi -\varphi\sin\varphi\cos\varphi\Big) dx=0
\end{eqnarray*}
which is not possible unless $q\equiv0$ in $[0,x_{0}]$. The theorem
is proved.
\end{proof}
\section{Proof of Theorem \ref{the2}} \label{4}
Let the potential $q(x)$ be monotone increasing in $[0,x_{0}]$ and
monotone decreasing in $[x_{0},1].$ We denote by $\tilde{q}(x)$ the
reverse of the potential, i.e., $\tilde{q}(x)=q(1-x)$. Then
$\tilde{y}( x,z) $ is the solution of the initial value problem
\begin{eqnarray} \label{22.1} \left\{
\begin{array}{ll}{-\tilde{y}}^{\prime \prime }{( x,z) +\tilde{q}(x)\tilde{y}( x,z)
=z}^{2}{\tilde{y}}(x,z),\\
\tilde{y}( 0) =0,~~ \tilde{y}^{\prime }( 0) =1.
 \end{array}
 \right.
\end{eqnarray}
As in section \ref{ps}, we define the associated Pr\"{u}fer transformation%
\begin{eqnarray} \label{22.2} \left\{
\begin{array}{ll}\tilde{y}( x,z)=\tilde{r}( x,z) \sin (
z\tilde{\theta}( x,z) ) , \\
\tilde{y}^{\prime }( x,z) =z\tilde{r}( x,z) \cos (
z\tilde{\theta}( x,z) ),\\
\tilde{\theta}( 0,z) =0.
 \end{array}
 \right.
\end{eqnarray}
As in \cite{3}, we have the following relations:
\begin{eqnarray} \label{3.2}
\left\{
 \begin{array}{ll} \tilde{y}( x,z_{n}) =( -1) ^{n+1}\frac{y(1-x,z_{n})}{%
z_{n}r( 1,z_{n}) },\\
\tilde{r}( x,z_{n}) =\frac{r(1-x,z_{n})}{r( 1,z_{n}) },\\
\tilde{\theta}( x,z_{n}) =\frac{n\pi }{z_{n}}-\theta (1-x,z_{n}),
 \end{array}
 \right.
\end{eqnarray}
where $\lambda_{n}=z_{n}^{2}$ is an eigenvalue of Problem
\eqref{1.1}-\eqref{1.2}.\\
\begin{proof}{ of Theorem \ref{the2}.}
As $\tilde{q}(x)=q(1-x),$ then $\tilde{q}(x)$\ be monotone
increasing in $[0,1-x_{0}]$ and monotone decreasing in
$[1-x_{0},1]$. We have $\tilde{q}'(x)=-q'(1-x)\leq
\frac{2}{15}\tilde{q}(0)=\frac{2}{15}q(1)$, thus for $x\in[0,1]$,
$|q'(x)|\leq q^{*}$ where $q^{*}=\frac{2}{15}\min\{q(0),~q(1)\}$.
Therefore by Theorem \ref{the1}, $\tilde{\theta}( 1-x_{0},z) $ is
increasing for $z\geq \sqrt{11\tilde{q}(1-x_{0})}=\sqrt{11q(
x_{0})}.$ Consequently, the function $\Psi ( z) =\theta ( x_{0},z)
+\tilde{\theta}( 1-x_{0},z) $ is increasing for $z\geq \sqrt{11q(
x_{0})}$. By \eqref{3.2}, $z_{n}\Psi ( z_{n}) =n\pi.$ Let $m$ be an
integer such that $m<n$ and $\lambda_{m}\geq11q(x_{0})$. Then
$$\Psi ( z_{n}) =\frac{n\pi }{z_{n}}\geq \Psi ( z_{m}) =%
\frac{m\pi }{z_{m}},$$ which implies that $\frac{\lambda
_{n}}{\lambda _{m}}\leq \frac{n^{2}}{m^{2}}.$ On the other hand, if
$q(x_{0})\leq\frac{\pi^{2}}{11},$ then
$$z_{1}=\sqrt{\lambda_{1}}\geq \sqrt{\pi^{2}+q^{-}} \geq \sqrt{11q(
x_{0})}$$ where $q^{-}=\min_{x\in[0,1]} q(x).$ Thus, in this case $
\frac{\lambda _{n}}{\lambda _{m}}\leq \frac{n^{2}}{m^{2}}$ for all
$n>m\geq1.$ If equality holds, then $\Psi ( z_{n}) =\Psi ( z_{m}),$
so that $\dot{\Psi}(z)=0$ for some $z>0.$ Hence
$\dot{\theta}(x_{0},z)=\dot{\tilde{\theta}}( 1-x_{0},z) =0 ,$ and in
view of Theorem \ref{the1}, $q\equiv0$ in $[ 0,x_{0}] $ and
$\tilde{q}\equiv0$ in $[ 0,1-x_{0}],$ i.e., $q\equiv0$ in $[0,1].$
\end{proof}

\end{document}